\title{Beyond Ohba's Conjecture: A bound on the choice number of
$k$-chromatic graphs with $n$ vertices}
\author{
Jonathan A. Noel\thanks{Department of Mathematics and Statistics, McGill
University, Montr\'eal, QC, Canada, jonathan.noel@mail.mcgill.ca.
Current affiliation: Mathematical Institute, University of Oxford,
noel@maths.ox.ac.uk.  Research supported by an NSERC CGS M award.},
Douglas B. West\thanks{Departments of Mathematics, Zhejiang Normal University
(Jinhua, Zhejiang Province, China 321004) and University of Illinois
(Urbana, IL, USA 61801), west@math.uiuc.edu.  Research supported by Recruitment
Program of Foreign Experts, 1000 Talent Plan, State Administration of Foreign
Experts Affairs, China.},
Hehui Wu\thanks{Department of Mathematics, Simon Fraser University,
Burnaby, B.C., Canada V5A1S6, noshellwhh@gmail.com},
Xuding Zhu\thanks{Department of Mathematics, Zhejiang Normal University,
Jinhua, Zhejiang Province, China 321004, xdzhu@zjnu.edu.cn.
Research supported by NSF11171310 and ZJNSF Z6110786.}}
\newtheoremstyle{case}{}{}{\normalfont}{}{\itshape}{:}{ }{}
\newtheorem{thm}{Theorem}[section]
\newtheorem{lem}[thm]{Lemma}
\newtheorem{prop}[thm]{Proposition}
\newtheorem{conj}[thm]{Conjecture}
\newtheorem{cor}[thm]{Corollary}
\newtheorem{prob}[thm]{Problem}
\theoremstyle{definition}
\newtheorem{defn}[thm]{Definition}
\newtheorem{const}[thm]{Construction}
\newtheorem{step}{Step}
\newtheorem*{ack}{Acknowledgment:}
\newtheorem{rem}[thm]{Remark}
\newtheoremstyle{case}{}{}{\normalfont}{}{\itshape}{\normalfont:}{ }{}
\theoremstyle{case}
\newtheorem{case}{Case}
\newcommand{\ch}{\text{\rm ch}}
\def\CH#1#2{\binom{#1}{#2}}
\def\CL#1{\left\lceil{#1}\right\rceil}
\def\FL#1{\left\lfloor{#1}\right\rfloor}
\def\FR#1#2{\frac{#1}{#2}}
\def\nkup{\left\lceil\FR{n+k-1}3\right\rceil}
\def\Lunion{\bigl|\bigcup_{v\in V(G)}L(v)\bigr|}
\def\SE#1#2#3{\sum_{#1=#2}^{#3}}
\def\VEC#1#2#3{{#1_#2},\ldots,{#1}_{#3}}
\def\comment#1{}
\def\st{\colon\,}
\def\esub{\subseteq}
\def\nul{\varnothing}
\def\ZZ{{\mathbb Z}}
\numberwithin{equation}{section}
\begin{document}

\maketitle

\begin{abstract}
Let $\ch(G)$ denote the choice number of a graph $G$ (also called ``list
chromatic number'' or ``choosability'' of $G$).  Noel, Reed, and Wu proved the
conjecture of Ohba that $\ch(G)=\chi(G)$ when $|V(G)|\le 2\chi(G)+1$.  We
extend this to a general upper bound:
$\ch(G)\le \max\{\chi(G),\lceil({|V(G)|+\chi(G)-1})/{3}\rceil\}$.  Our result
is sharp for $|V(G)|\le 3\chi(G)$ using Ohba's examples, and it improves the
best-known upper bound for $\ch(K_{4,\dots,4})$.
\end{abstract}

\baselineskip 16pt

\section{Introduction}

Choosability is a variant of classical graph coloring; it models limited 
availability of resources.  Each vertex $v$ in a graph $G$ is assigned a list
$L(v)$ of available colors.  An \emph{$L$-coloring} is
a proper coloring $f$ of $G$ such that $f(v)\in L(v)$ for all $v\in V(G)$, and
$G$ is \emph{$k$-choosable} if $G$ has an $L$-coloring whenever $|L(v)|\geq k$
for all $v\in V(G)$.  The \emph{choice number} of $G$, denoted $\ch(G)$, is the
least $k$ such that $G$ is $k$-choosable.  Introduced by Vizing~\cite{Vizing}
and by Erd\H{o}s, Rubin, and Taylor~\cite{ERT}, choosability is now a
well-studied topic (surveyed in~\cite{Alon,Krat,Tuza,Woodall}).

Since $k$-choosability requires an $L$-coloring when $L(v)=\{1,\dots,k\}$
for all $v\in V(G)$, always $\ch(G)\ge\chi(G)$, where $\chi(G)$ is the chromatic
number.  However, there is no upper bound on $\ch(G)$ in terms of the chromatic
number $\chi(G)$ (even for bipartite graphs).  Such bounds exist when the
number of vertices is specified, since always $\ch(G)\le |V(G)|$, so it is
natural to seek the maximum of $\ch(G)$ among $k$-chromatic graphs with $n$
vertices.
Ohba~\cite{OhbaOrig} conjectured $\ch(G)=\chi(G)$ for $n\le 2\chi(G)+1$.
Several papers proved partial results in this direction
(see~\cite{Kostochka,OhbaOrig,ReedSudakov2,ReedSudakov,Alpha=3}),
and the conjecture has now been proved:

\begin{thm}[Noel, Reed, and Wu~\cite{NRW}]\label{NRW}
If $|V(G)|\leq 2\chi(G)+1$, then $\ch(G)=\chi(G)$.
\end{thm}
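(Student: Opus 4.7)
The plan is to reduce to the complete multipartite case and then induct on $k=\chi(G)$. Since the choice number is monotone under edge addition, we may freely add edges until $G$ becomes the complete $k$-partite graph whose parts are the color classes of an optimal $k$-coloring. Writing these parts as $V_1,\ldots,V_k$, the hypothesis becomes $\sum_i|V_i|=n\le 2k+1$, so $\sum_i(|V_i|-1)\le k+1$; in particular at most $k+1$ parts have size at least $2$, and the remaining parts are singletons. It therefore suffices to prove that every such complete $k$-partite graph is $k$-choosable.

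I would proceed by strong induction on $k$, with $k\le 2$ essentially trivial. Fix a $k$-list assignment $L$. The clean inductive step is to find a part $V_j$ with $|V_j|\ge 2$ and a color $c\in\bigcap_{v\in V_j}L(v)$. Coloring all of $V_j$ with $c$, deleting $V_j$, and removing $c$ from any remaining list that contains it produces a complete $(k-1)$-partite graph on $n-|V_j|\le 2(k-1)+1$ vertices with list sizes at least $k-1$, to which the inductive hypothesis applies.

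The core difficulty is to force such a pair $(c,V_j)$ to exist, possibly after some preprocessing of $L$. If every non-singleton part $V_j$ has $\bigcap_{v\in V_j}L(v)=\emptyset$, the reduction fails directly, so one needs a workaround. A natural tactic is to select a color $c^{*}$ appearing in as many lists as possible: if $c^{*}$ appears in every list, any part of size at least $2$ works; if $c^{*}$ is missing only from a small set $B$ of vertices, one tries to precolor $B$ from colors that avoid the target part $V_j$ via a short augmenting argument, then finish with $c^{*}$ on $V_j$. If instead no color is popular, the incidence sum $\sum_v|L(v)|=nk$ is spread thinly across many colors, and since at most $k+1$ parts are non-singletons, many singleton parts must carry nearly disjoint lists.

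The hardest case is the singleton-rich regime, where each singleton $\{u\}$ demands a private color. There the analysis centers on the bipartite vertex-color incidence graph $H$: an $L$-coloring amounts to choosing pairwise disjoint color classes, one per part, each meeting every list in its part, and any failure yields, via Hall's theorem, a subset $T$ of vertices whose lists collectively span few colors. One then argues that such a $T$ either exposes a strictly smaller instance to which induction applies---typically by identifying vertices with identical or nested lists, or by splitting off an easily-colored sub-instance---or contradicts the numerical bound $n\le 2k+1$. Making this deficiency argument interact properly with the non-singleton parts, and avoiding circular reductions where the ``smaller instance'' fails to be genuinely smaller, is where I expect the real difficulty to lie.
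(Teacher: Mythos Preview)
The paper does not prove this theorem at all: Theorem~\ref{NRW} is quoted from Noel, Reed, and Wu~\cite{NRW} and used as a black box (it supplies the base case $n\le 2k+1$ for the main result, Theorem~\ref{main}). So there is no ``paper's own proof'' to compare against.

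That said, your proposal is not a proof either; it is a plan with the hard parts left open, and you say so yourself. The reduction to complete $k$-partite graphs and the ``find a non-singleton part with a common color, peel it off, induct'' step are standard and were already used in partial results toward Ohba's conjecture (e.g.\ \cite{ReedSudakov,ReedSudakov2}). The entire content of the theorem lies in the case where no such part exists, and there your write-up offers only heuristics: pick a popular color, precolor the exceptional set by ``a short augmenting argument,'' or run a Hall-deficiency analysis that ``either exposes a strictly smaller instance\dots or contradicts $n\le 2k+1$.'' None of these is carried out, and in fact none of them works in the straightforward form you describe---earlier papers using exactly these ideas only reached $n\le (2-o(1))k$ or needed extra hypotheses such as bounded independence number. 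The actual proof in~\cite{NRW} does use Hall's theorem, but it requires a much more delicate set-up: one constructs a carefully chosen near-proper-coloring and a tailored bipartite system so that Hall's condition can be verified, with substantial case analysis on the structure of the parts and lists. Your sketch identifies the right neighborhood but does not contain the key ideas that make the argument close.
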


With $n$ and $k$ fixed, it suffices to study complete $k$-partite graphs,
since adding an edge cannot reduce the choice number.  In discussing complete
$k$-partite graphs, we use {\it part} rather than the formal term ``partite
set'' to mean a maximal stable set (also called ``independent set'', a
{\it stable set} is a set of pairwise nonadjacent vertices).  We write
$K_{1*k_1,2*k_2,\dots}$ for the complete multipartite graph with $k_i$ parts of
size $i$.

When $k$ is even, $K_{2*(k-1),4*1}$ and $K_{1*(k/2-1), 3*(k/2+1)}$ are not
$k$-choosable~\cite{EOOS}, making Theorem~\ref{NRW} sharp.  However, when $k$
is odd, $K_{2*(k-1),4*1}$ is $k$-choosable~\cite{EOOS}.  Since
$\ch(K_{2*(k-1),5*1})>k$ for all $k$~\cite{EOOS}, the only unsettled case is
whether $n=2k+2$ implies $\ch(G)=k$ when $k$ is odd.  Noel~\cite{Noel}
conjectured that the only complete $k$-partite graphs on $2k+2$ vertices that
are not $k$-choosable are $K_{1*(k/2-1),3*(k/2+1)}$ and $K_{2*(k-1),4*1}$ for
even $k$.
%(proper subgraphs of these graphs may also be $k$-choosable; for example,
%adding one diagonal chord to a $6$-cycle yields a proper subgraph of $K_{3,3}$
%that is not $2$-choosable~\cite{ERT}.)  

Moving to larger $n$, Ohba~\cite{3Ohba} determined the choice number
for a family of complete $k$-partite graphs with at most $3k$ vertices.

\begin{thm}[Ohba~\cite{3Ohba}]
\label{tight}
$\ch(K_{1*k_1,3*k_3})=
\max\left\{k,\left\lceil\frac{n+k-1}{3}\right\rceil\right\}$,
where $k=k_1+k_3$ and $n=k_1+3k_3$.
\end{thm}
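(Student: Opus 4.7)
The plan is to prove the lower and upper bounds separately, setting $M:=\max\{k,\lceil(n+k-1)/3\rceil\}$.

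For the lower bound, $\ch(G)\geq k$ is immediate from $\chi(G)=k$, so the content lies in showing $\ch(G)\geq\lceil(n+k-1)/3\rceil$. I would exhibit an explicit list assignment of size $\lceil(n+k-1)/3\rceil-1$ under which $G$ has no proper coloring. The guiding principle is that in any proper coloring of a complete multipartite graph, each color class is contained in a single part and therefore covers at most $3$ vertices, while at least $k$ colors must be used in total (one per part). I would engineer the lists of the three vertices in each size-$3$ part to share only a small common core and otherwise differ, so that most triples must consume three palette colors in any coloring; a double-counting argument then produces the required $\lceil(n+k-1)/3\rceil$ lower bound on the palette size, contradicting list size $\lceil(n+k-1)/3\rceil-1$.

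For the upper bound, assume $|L(v)|=M$ for every vertex (after deleting surplus colors if needed) and induct on $n$. The central tool is to consider, for each part $P_i$, the set $A_i=\bigcap_{v\in P_i}L(v)$ of colors usable monochromatically on $P_i$. If every $A_i$ is nonempty and $\{A_1,\dots,A_k\}$ satisfies Hall's condition, choose one color per part via a system of distinct representatives and we are done. When Hall's condition fails, a deficient subfamily $\mathcal{F}$ of parts has very few common colors available; I would then either (i) locate a vertex $v$ whose list contains a color appearing in few other lists, color $v$ with it, and recurse on $G-v$, or (ii) split some size-$3$ part in $\mathcal{F}$ across two or three colors, paying one extra color but eliminating three vertices from the remaining problem, an exchange justified by the slack between $M$ and $\lceil(n+k-1)/3\rceil$.

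The main obstacle is the upper bound in the rigid regime where no reduction applies: every singleton shares its colors with other parts, every size-$3$ part has $A_i=\varnothing$, and the lists are tightly packed. Here every size-$3$ part demands two or three colors, and the inequality $M\geq\lceil(n+k-1)/3\rceil$ must be shown to exactly absorb this extra demand. Aligning this accounting with the lower-bound construction across all residues of $n+k-1\pmod 3$ and ensuring the induction closes cleanly on each residue class is the delicate bookkeeping step.
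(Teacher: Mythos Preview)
This theorem is cited from Ohba's 2004 paper; the present paper does not give a standalone proof of it. What the paper \emph{does} do is reprove both directions as byproducts of its general machinery: the upper bound is the special case of Theorem~\ref{main} (the main result, proved via minimal counterexample, the Small Pot Lemma, the bound $\alpha(G)\le 4$, the merging procedure of Sections~\ref{outline}--\ref{additional}, and two layers of Hall's Theorem), and the lower bound is the parenthetical instance of Construction~\ref{smallm} with $m=3$ adjusted for singleton parts.

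Your proposal has genuine gaps in both directions. For the lower bound you never exhibit a list assignment; you describe a heuristic (``engineer the lists \ldots\ to share only a small common core'') and assert a double-counting argument will finish, but the actual construction in the paper is specific: take a palette of $2k-1-k_1$ colors, give each singleton the full palette, and on each size-$3$ part assign the three complements $U\setminus X_1,U\setminus X_2,U\setminus X_3$ of a balanced partition of $U$. The point is that any $L$-coloring needs two colors per $3$-part and one per singleton, all distinct, hence $2k_3+k_1=2k-k_1$ colors, exceeding $|U|$. Without something this concrete your lower bound is not a proof.

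For the upper bound, you yourself flag the gap: your induction scheme (Hall on the common-color sets $A_i$, else reduce) collapses in the ``rigid regime'' where every $A_i=\varnothing$ and no cheap reduction is available, and you say only that the bookkeeping ``must be shown to exactly absorb'' the deficit. That is precisely the hard part, and neither your option (i) nor (ii) is justified there. The paper's route is entirely different: rather than inducting, it argues on a minimal counterexample, forces all parts to have size at most $4$ and every color to appear at most twice per part, then \emph{merges} pairs within parts and applies Hall's Theorem to the merged lists. The delicate accounting you allude to is handled by the calibrated properties (P1)--(P8) in Definition~\ref{rich} and the two-stage SDR argument of Lemmas~\ref{P1P8} and the final lemma of Section~\ref{additional}. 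Your outline does not supply any substitute for this.
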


Earlier, Kierstead~\cite{Kierstead} computed the special case
$\ch(K_{3*k})= \CL{\FR{4k-1}3}=\CL{\FR{n+k-1}3}$.  Our main result relies
on Theorem~\ref{NRW} and extends Ohba's upper bound to all graphs.  It is sharp
when $n\le 3k$ (with $n-k$ even) and useful when $n$ is bounded by a small
multiple of $k$, but it is weak when $n/k$ is large:

\begin{thm}
\label{main}
For any graph $G$ with $n$ vertices and chromatic number $k$,
\[\ch(G)\leq \max\left\{k,\left\lceil\frac{n+k-1}{3}\right\rceil\right\}.\]
\end{thm}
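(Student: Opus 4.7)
The plan is to induct on $n$. Since adding edges cannot decrease the choice number, we may assume $G$ is a complete multipartite graph $K_{n_1,\ldots,n_k}$ with parts $V_1,\ldots,V_k$ whose sizes sum to $n$. Let $m=\max\{k,\nkup\}$. When $n\le 2k+1$ we have $m=k$, and Theorem~\ref{NRW} gives $\ch(G)\le k=m$ at once, so this serves as the base case. From now on assume $n\ge 2k+2$, so $m=\nkup\ge k+1$, and in particular some part $V_i$ has $|V_i|\ge 3$ (otherwise $n\le 2k$).

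Given a list assignment $L$ with $|L(v)|\ge m$ for all $v$, the aim is a reduction that decreases $n$ by $3$ while decreasing the list sizes only by $1$. Specifically, I would look for a color $c$ and a part $V_i$ such that at least three vertices of $V_i$ have $c$ in their lists; I then color three such vertices with $c$, delete them, and remove $c$ from every remaining list, producing $L'$ with $|L'(u)|\ge m-1$. If $|V_i|>3$ the reduced graph is complete $k$-partite on $n-3$ vertices, and the routine inequality $m-1\ge\CL{((n-3)+k-1)/3}$ lets the inductive hypothesis apply. If $|V_i|=3$ the reduced graph is complete $(k-1)$-partite on $n-3$ vertices, and the easy bound $m-1\ge\max\{k-1,\CL{((n-3)+(k-1)-1)/3}\}$ again closes the induction. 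In either case the three-for-one reduction matches the shape of the bound $\nkup$ exactly.

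The main obstacle is proving that such a pair $(V_i,c)$ always exists, or else dealing with the exceptional case in which no color appears in the lists of three vertices of a single part. In this exceptional regime each color lies in at most $2k$ lists altogether, so the double count $\sum_v |L(v)|\ge nm$ yields $\Lunion\ge nm/(2k)$, giving a very large pool of available colors. To exploit this I would try a Hall-style reduction using several colors simultaneously: find a small collection of pairwise disjoint monochromatic stable sets (each a set of vertices in one part sharing a common color) whose removal yields a smaller instance that still satisfies the hypothesis of the theorem. The delicate part is maintaining the balance between the decrease in $n$ and the decrease in list size, since parts of size $1$ and $2$ are precisely the configurations in which a naive reduction removes too few vertices per color used; an explicit case analysis based on how many such small parts are present, together with the counting bound above, should eventually either produce a three-for-one reduction of the kind just described or else build the required $L$-coloring directly.
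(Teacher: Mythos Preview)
Your three-for-one reduction is exactly the content of Corollary~\ref{no3} in the paper: if some color appears in three lists on one part, then the minimality assumption is contradicted.  So in a minimal counterexample your ``exceptional case'' is the \emph{only} case, and all the work lies there.  Unfortunately, your plan for that case does not close.

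First, your counting bound goes in the wrong direction.  You derive $\Lunion\ge nm/(2k)$, i.e.\ that the palette is large, but a large palette is not by itself helpful for producing an $L$-coloring.  The paper instead invokes the Small Pot Lemma (Lemma~\ref{<n}) to assume $\Lunion<n$, and this upper bound is what drives everything: combined with the two-per-part property it forces $\alpha(G)\le4$ (Lemma~\ref{alpha}) and several arithmetic constraints on the $k_i$.  Without the Small Pot Lemma you have no control on part sizes and no leverage.

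Second, your proposed ``remove several monochromatic stable sets and induct'' cannot work in the shape you describe.  In the exceptional regime every monochromatic stable set has size at most~$2$, so removing $t$ of them drops $n$ by at most $2t$ while dropping list sizes by~$t$; a routine comparison shows the inductive hypothesis applies only if each removal also eliminates an entire part (so that $k$ drops too).  That would force an SDR of common colors across parts of size $\le2$, but Corollary~\ref{no2} says that in a minimal counterexample the two lists in a $2$-part are \emph{disjoint}, so such an SDR need not exist.  In short, two-for-one reductions do not match the $\nkup$ bound, and the gap cannot be bridged by ``an explicit case analysis'' alone.

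The paper's actual argument is quite different and substantially more intricate: after the Small Pot Lemma and the reductions above, one \emph{merges} pairs of vertices within each $3$- and $4$-part according to a carefully designed protocol (Sections~\ref{greed}--\ref{additional}), and then proves via Hall's Theorem that the resulting family of (shrunken) lists has an SDR.  The merges are chosen so that eight technical properties (P1)--(P8) hold, and verifying Hall's condition proceeds by a cascade of thresholds tuned to those properties.  Your outline contains neither the Small Pot reduction nor any analogue of this merging scheme, and without them the inductive step does not go through.
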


%\begin{thm}
%\label{main}
%If $G$ is an $n$-vertex graph with chromatic number $k$, and $n\ge2k-1$, then
%\[\ch(G)\leq \left\lceil\frac{n+k-1}{3}\right\rceil.\]
%\end{thm}

%In view of Theorem~\ref{NRW}, our result yields the corollary below.
%In fact, $n\le 2k+1$ is the most difficult range for Corollary~\ref{maincor},
%since in that range the desired bound is $k$.  In the proof of
%Theorem~\ref{main}, the desired upper bound is always at least $k+1$.
%
%\begin{cor}
%\label{maincor}
%For an $n$-vertex graph $G$ with $n$ vertices and chromatic number $k$,
%\[\ch(G)\leq \max\left\{k,\left\lceil\frac{n+k-1}{3}\right\rceil\right\}.\]
%\end{cor}

For use in bounding $\ch(G)$ for the random graph,~\cite{ERT} suggested finding
good bounds on $\ch(K_{m*k})$.  By our result, $K_{3*k}$ has the largest choice
number among $k$-chromatic graphs with at most $3k$ vertices.  For $m=4$,
Yang~\cite{Daqing} proved $\FL{\FR{3k}2}\le\ch(K_{4*k})\le\CL{\FR{7k}4}$; our
result improves the upper bound to $\CL{\frac{5k-1}3}$.  Since the writing of
our paper, Kierstead, Salmon, and Wang~\cite{KSW} have determined that
$\ch(K_{4*k})$ equals the easy lower bound $\FL{\FR{3k}2}$.  Our result also
yields $\FL{\FR{8k}5}\le \ch(K_{5*k})\le 2k$ and 
$\FL{\FR{5k}3}\le \ch(K_{6*k})\le \CL{\FR{7k-1}3}$.  The bounds
$\FL{\FR{2k(m-1)}m}\le \ch(K_{m*k})\le \CL{\FR{k(m+1)-1}3}$ are valid for all
$m$, but they are interesting only for small $m$.  The lower bound arises from
the following construction.

\begin{const}\label{smallm}
Consider a universe $U$ of $2k-1$ colors, split into sets $\VEC X1m$ of sizes
$\FL{\FR{2k-1}m}$ and $\CL{\FR{2k-1}m}$.  Assign list $U-X_i$ to the $i$th
vertex of each part.  An $L$-coloring must use at least two colors on
each part, and these pairs must be disjoint.  Hence at least $2k$ colors
must be used, but $|U|<2k$, so there is no $L$-coloring.  The list sizes are
all at least $2k-1-\CL{\FR{2k-1}m}$, which is at least $\FL{\FR{2k(m-1)}m}-1$.
(For sharpness of Theorem~\ref{main} when $n=3k-2i$, use $2k-1-i$ colors, with
$k-i$ parts of size $3$ and $i$ singleton parts whose list is the full color
set.)
\end{const}

\begin{prob}
Determine $\ch(K_{m*k})$ for small $m$, beginning with $m=5$.
\end{prob}

Noel~\cite{Noel} conjectured that in fact $K_{m*k}$ always has the largest
choice number among $k$-chromatic graphs with at most $mk$ vertices.  More
generally:

\begin{conj}[Noel~\cite{Noel}] \label{n/k}
For $n\geq k\geq 2$, among $n$-vertex $k$-chromatic graphs the choice number is
maximized by a complete $k$-partite graph with independence number $\CL{n/k}$.
\end{conj}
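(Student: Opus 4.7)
The plan is a two-step reduction. \textbf{Step 1: reduce to complete $k$-partite graphs.} If $G$ is $k$-chromatic on $n$ vertices, fix a proper $k$-coloring of $G$ with color classes $V_1,\ldots,V_k$ and let $H = K_{|V_1|,\ldots,|V_k|}$. Then $G \subseteq H$ and $\chi(H)=k$, while $\ch(H)\ge\ch(G)$ because adding edges does not decrease the choice number. Hence the maximum of $\ch$ over $k$-chromatic graphs on $n$ vertices is attained by some complete $k$-partite graph, and it suffices to prove that among these the maximizer can be taken to have every part of size $\lfloor n/k\rfloor$ or $\lceil n/k\rceil$.

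\textbf{Step 2: a balancing lemma.} The heart of the conjecture is: if $G=K_{a_1,\ldots,a_k}$ with $a_i \ge a_j+2$ for some $i,j$, then the graph $G'$ obtained by moving one vertex $v$ from part $A_i$ to part $A_j$ satisfies $\ch(G')\ge\ch(G)$. Iterating along any rebalancing sequence then yields the balanced graph as a maximizer, completing the proof. A crucial subtlety is that $G$ is \emph{not} a subgraph of $G'$: the move adds the edges from $v$ to $A_i\setminus\{v\}$ and deletes the edges from $v$ to $A_j$, with a net gain of $(a_i-1)-a_j\ge 1$ edges. So plain edge-monotonicity is insufficient, and a finer comparison is required.

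To attack the balancing lemma, I would argue by contradiction: assume $\ch(G')<\ch(G)=c$, and fix a list assignment $L$ on $V(G)=V(G')$ with $|L(u)|=c-1$ admitting no proper $L$-coloring of $G$; by hypothesis, $G'$ admits an $L$-coloring $f$. The only obstruction to reusing $f$ as an $L$-coloring of $G$ is that $f(v)$ may coincide with $f(w)$ for some $w\in A_j$, while we no longer need $f(v)\ne f(u)$ for $u\in A_i\setminus\{v\}$. I would try to exploit the freedom in choosing $L$ and $f$ in one of two ways: (i) pick $L$ adversarially so that $L(v)$ is small relative to $\bigcup_{u\in A_j}L(u)$ and run a Hall/defect-style recoloring of $v$ to eliminate the conflict on $A_j$; or (ii) reduce to the case $a_i=a_j+2$ and invoke a kernel-method or Combinatorial Nullstellensatz argument tailored to the multipartite structure.

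I expect the balancing step to be the genuine obstacle. Conjecture~\ref{n/k} subsumes the tight cases of Theorem~\ref{tight} and the extremal constructions behind Theorem~\ref{NRW}, so any proof should recover these as base cases, and induction on $n$ (with Theorems~\ref{NRW} and~\ref{tight} supplying the bottom rung) is the most plausible overall scaffold. The failure of edge-monotonicity for the rebalancing move strongly suggests that the lemma cannot be reduced to standard choice-number machinery and will require a new combinatorial idea; this, I suspect, is why the conjecture remains open despite the strong upper bound already supplied by Theorem~\ref{main}.
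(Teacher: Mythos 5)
The statement you were asked about is Conjecture~\ref{n/k}, which is an \emph{open conjecture} in this paper: the authors do not prove it, and only record the partial cases that follow from Theorem~\ref{NRW} (namely $n\le 2k+1$) and from Theorems~\ref{main} and~\ref{tight} together ($n\le 3k$ with $n-k$ even). So there is no proof in the paper to compare against, and your proposal does not close the gap either. Your Step~1 is correct and standard (it is exactly the reduction the paper uses in its introduction): completing a proper $k$-coloring to a complete $k$-partite graph only adds edges, so the maximum of $\ch$ over $n$-vertex $k$-chromatic graphs is attained by some $K_{a_1,\dots,a_k}$. But everything then rests on your Step~2 ``balancing lemma,'' that moving a vertex from a part of size $a_i$ to a part of size $a_j\le a_i-2$ does not decrease the choice number, and you give no proof of it --- only two speculative attack sketches. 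As you yourself observe, the move is not edge-monotone (edges to $A_j$ are deleted), so the one tool that is actually available does not apply; and your contradiction setup in attack (i) only attempts to show that the \emph{same} bad list assignment $L$ for $G$ remains bad for $G'$, which is a strictly stronger statement than $\ch(G')\ge\ch(G)$ and has no reason to hold. The balancing lemma is essentially the entire content of the conjecture, not a technical step, and your proposal explicitly concedes it requires ``a new combinatorial idea.''

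To be clear about what is and is not salvageable: your framework is a reasonable way to \emph{state} the difficulty, and it correctly identifies why the conjecture does not follow from known machinery. But a proof proposal whose central lemma is flagged by its own author as the unresolved obstacle is not a proof. If you want to make partial progress in this direction, the honest targets are the cases the paper already covers (reprove Conjecture~\ref{n/k} for $n\le 3k$ with $n-k$ even by combining the upper bound of Theorem~\ref{main} with the exact value in Theorem~\ref{tight} for $K_{1*k_1,3*k_3}$), or the determination of $\ch(K_{m*k})$ for small $m$ as in Problem~1.7; the general balancing statement should be presented as a conjecture, not as a lemma awaiting routine verification.
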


Theorem~\ref{NRW} implies Conjecture~\ref{n/k} for $n\leq 2k+1$.  
Theorems~\ref{main} and~\ref{tight} together imply Conjecture~\ref{n/k} when
$n\leq 3k$ and $n-k$ is even.

Construction~\ref{smallm} never yields lists of size exceeding $2k$.  When $m$
is large in terms of $k$, a simple explicit construction generalizing an
example in \cite{ERT} gives a good lower bound.

\begin{const}\label{largem}
Let $m=\CH{kj-1}{(k-1)j}$.  For $K_{m*k}$, assign all $(k-1)j$-sets from a set
$U$ of $kj-1$ colors as lists on each part.  Each list omits $j-1$ colors.
If fewer than $j$ colors are used on some part, then this part has a list from
which no color is chosen.  Hence $kj$ colors are needed, but $|U|<kj$.
When $k$ is large, the leading behavior of $m$ is a constant times
$k^j/\sqrt j$, which yields $j\approx \log_k m+\FR12 \log_k\log_k m$.  Thus
$\ch(K_{m*k})\ge c(k-1)\FR{\log m}{\log k}$.
\end{const}

For intermediate $m$, Constructions~\ref{smallm} and~\ref{largem} can be
combined.  Alon~\cite{Alonlog} improved the latter, proving
$c_1k\log m\le\ch(K_{m*k})\le c_2k\log m$ for some constants $c_1$ and
$c_2$.  This yields choice number $O(\FR{n\log\log n}{\log n})$ for the random
graph.  More precise asymptotic bounds were later obtained by Gazit and
Krivelevich~\cite{Kriv}: $\ch(K_{m*k})=(1+o(1))\FR{\log m}{\log(1+1/k)}$.
With Conjecture~\ref{n/k}, the general upper bound would be
$\ch(G)\le (1+o(1))\FR{\log(n/k)}{\log(1+1/k)}$.
\bigskip

Our proof of Theorem~\ref{main} begins with several restrictions on minimal
counterexamples.  First, Theorem~\ref{NRW} verifies the claim in the most
difficult range ($n\le 2k+1$), which will serve as a basis.  In that range
the lists have size only $k$; when the problem is restricted to $n\ge 2k+2$,
the lists will always have size at least $k+1$.  As noted, we may assume that
$G$ is a complete $k$-partite graph.  We prove that in a minimal
counterexample, all parts have size at most $4$ and no color appears in more
than two lists on one part.  We then produce an $L$-coloring, contradicting the
assumption of a counterexample.

\begin{step}
\label{S1}
Break $V(G)$ into stable sets of size at most $2$ by splitting some parts.
\end{step}

\begin{step}
\label{S2}
Produce an $L$-coloring whose color classes are the sets obtained in
Step~\ref{S1}.
\end{step}

In Section~\ref{outline}, we prove that if a partition of the type in
Step~\ref{S1} satisfies several special properties, then Hall's
Theorem~\cite{Hall} on matchings in bipartite graphs produces an $L$-coloring
to complete Step~\ref{S2}.  In Sections~\ref{greed} and~\ref{additional}, we
show that $V(G)$ admits a partition satisfying these special properties,
thereby completing Step 1 and the proof of Theorem~\ref{main}.

\section{Preliminary Reductions}\label{prelim}

If Theorem~\ref{main} is not true, then there is a minimal counterexample.

\begin{rem}\label{mincex}
If Theorem~\ref{main} fails, then by Theorem~\ref{NRW} there is an $n$-vertex
complete $k$-partite graph $G$ with list assignment $L$ such that $G$ has no
$L$-coloring, $n\ge2k+2$, $|L(v)|\ge\nkup\ge k+1$ for all $v\in V(G)$, and
the conclusion of Theorem~\ref{main} holds for all graphs with fewer vertices.
\end{rem}

Henceforth, $G$ and $L$ will have the properties stated in Remark~\ref{mincex}.
We will derive additional properties, after which we will produce an
$L$-coloring of $G$.  For example, we may assume $\Lunion<n$ due to the
following lemma proved independently by Kierstead~\cite{Kierstead} and by Reed
and Sudakov~\cite{ReedSudakov,ReedSudakov2}.

\begin{lem}[\cite{Kierstead,ReedSudakov,ReedSudakov2}] \label{<n}
If $G$ is not $r$-choosable, then there is a list assignment $L$ such
that $G$ has no $L$-coloring, all lists have size at least $r$, and their
union has size less than $|V(G)|$.
\end{lem}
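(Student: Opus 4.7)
The plan is to take a bad list assignment $L$ with $|L(v)|\ge r$ for all $v$ that minimizes $\Lunion$, and prove this minimum is at most $|V(G)|-1$ by contradiction. Shrinking any list preserves badness, so first assume $|L(v)|=r$ for every $v$, and let $U=\bigcup_{v\in V(G)}L(v)$. Suppose for contradiction that $|U|\ge|V(G)|$.

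Form the bipartite incidence graph $B$ on $V(G)\cup U$ with $v\sim c$ iff $c\in L(v)$. A matching of $B$ saturating $V(G)$ would define an injection $f\colon V(G)\to U$ with $f(v)\in L(v)$; injectivity alone forces $f$ to be a proper $L$-coloring, contradicting badness. Hence no such matching exists, and Hall's Theorem yields $S\subseteq V(G)$ with $\bigl|\bigcup_{v\in S}L(v)\bigr|<|S|$. Choosing $S$ of minimum cardinality forces $T:=\bigcup_{v\in S}L(v)$ to have size exactly $|S|-1$ and $L(v)\subseteq T$ for every $v\in S$. Since $|U|\ge|V(G)|\ge|S|>|T|$ there is a color $c\in U\setminus T$, and this $c$ occurs only in lists of vertices in $V(G)\setminus S$.

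The reduction is to eliminate $c$: for each $v\notin S$ with $c\in L(v)$, replace $L(v)$ by $(L(v)\setminus\{c\})\cup\{c_v\}$ for a suitably chosen $c_v\in U\setminus L(v)$, leaving other lists unchanged, producing $L'$ with $|L'(v)|=r$ and $\bigl|\bigcup_{v}L'(v)\bigr|\le|U|-1$. To contradict minimality, $L'$ must be bad: given any putative $L'$-coloring $f'$, one builds an $L$-coloring $f$ of $G$ by setting $f(v)=c$ wherever $f'(v)=c_v\notin L(v)$, and $f(v)=f'(v)$ elsewhere.

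The main obstacle is choosing the $c_v$'s so that the set of vertices flipped to $c$ is independent in $G$, which is what makes $f$ proper. There is freedom both in choosing $c$---writing $n_c=|\{v\colon c\in L(v)\}|$, the identity $\sum_c n_c=r|V(G)|$ combined with $|U|\ge|V(G)|$ forces some $c$ with $n_c\le r$---and in the $c_v$'s. One clean arrangement is to fix a single $c^*\in U$ lying outside every list containing $c$ and set every $c_v=c^*$, so that the $f'$-preimage of $c^*$ is automatically independent in $G$. The structural property $L(v)\subseteq T$ for $v\in S$ confines the modification to $V(G)\setminus S$, so the swap never interacts with $S$; the heart of the argument is verifying that a suitable $c^*$ always exists, which is where the interplay between $|U|\ge|V(G)|$, $|L(v)|=r$, and the deficient set $S$ comes into play.
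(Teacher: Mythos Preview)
Your argument has a genuine gap at exactly the point you flag as ``the heart of the argument'': you never verify that a color $c^*\in U$ lying outside every list containing $c$ actually exists. Without this, the proof is incomplete. Worse, the claim is not obviously true and your sketch gives no mechanism to prove it. Even restricting to a color $c$ with $n_c\le r$ (via your averaging observation), the lists $\{L(v):c\in L(v)\}$ can jointly cover all of $U$---they all contain $c$ and together can contribute up to $1+r(r-1)$ further colors, which need not be smaller than $|U|$. The vague appeal to ``interplay between $|U|\ge|V(G)|$, $|L(v)|=r$, and the deficient set $S$'' is not an argument; you also need $c\in U\setminus T$, and your averaging does not respect that restriction.

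The standard proof avoids this difficulty by choosing the replacement color from $L(S)$ rather than from $U\setminus L(V_c)$. Take any $c'\in L(S)$ and set $L'(v)=(L(v)\setminus\{c\})\cup\{c'\}$ when $c\in L(v)$. In the swap $f'\mapsto f$, a vertex is flipped to $c$ precisely when $f'(v)=c'$ and $c'\notin L(v)$; hence the flipped set is contained in the $f'$-color-class of $c'$, which is automatically independent because $f'$ is proper. No extra hypothesis on the replacement color is needed, so there is nothing to verify. (One must then deal with the possibility that $|L'(v)|=r-1$ when both $c,c'\in L(v)$, but that is a separate and much milder technicality; your write-up never reaches it.) The point is that your ``main obstacle'' dissolves once the replacement color is taken from $L(S)$: the independence of the flipped set comes for free from the properness of $f'$, not from any avoidance condition on the replacement color.
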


The reduction $\Lunion<n$ is a standard reduction for minimal counterexamples
in choosability problems, so much so that it has a name: the ``Small Pot
Lemma''.  It has been applied in diverse situations, including
\cite{Choi,CR1,CR2,CR3,NRW,KSW}.

Next, we obtain more specific restrictions on $G$
and $L$ for our problem.  These use the following key proposition.

\begin{prop}
\label{min}
If $A$ is a stable set in $G$ whose lists have a common color, then 
\[\left\lceil\frac{|V(G-A)|+\chi(G-A)-1}{3}\right\rceil=\left\lceil\frac{n+k-1}{3}\right\rceil.\]
\end{prop}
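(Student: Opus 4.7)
\textbf{Proof plan for Proposition~\ref{min}.}
The strategy is to fix a color common to the lists on $A$, color $A$ with it, delete that color from all remaining lists, and then apply the minimality of $G$ to $G-A$. The hypothesis that $G$ has no $L$-coloring then forces a list-size inequality that, together with the bound from Remark~\ref{mincex}, yields the desired equality.

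In detail, choose a color $c$ lying in $L(v)$ for every $v\in A$, and define $L'(v)=L(v)\setminus\{c\}$ for each $v\in V(G-A)$. Any proper $L'$-coloring of $G-A$ extends to an $L$-coloring of $G$ by assigning $c$ to every vertex of $A$: the extension is proper because $A$ is stable and $c$ is used on no vertex outside $A$. Since $G$ admits no $L$-coloring, $G-A$ admits no $L'$-coloring.

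Because $|V(G-A)|<n$, the minimality of $G$ guarantees that Theorem~\ref{main} holds for $G-A$. Set
\[
r \;=\; \max\!\left\{\chi(G-A),\ \left\lceil\frac{|V(G-A)|+\chi(G-A)-1}{3}\right\rceil\right\},
\]
so $G-A$ is $r$-choosable. Since no $L'$-coloring of $G-A$ exists, some vertex $v\in V(G-A)$ must satisfy $|L'(v)|<r$, hence $|L(v)|\le r$. By Remark~\ref{mincex}, $|L(v)|\ge\lceil(n+k-1)/3\rceil\ge k+1 > k\ge\chi(G-A)$, so the maximum defining $r$ is attained by the ceiling term, giving $r=\lceil(|V(G-A)|+\chi(G-A)-1)/3\rceil$. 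Combining with $|L(v)|\ge\lceil(n+k-1)/3\rceil$ yields
\[
\left\lceil\frac{n+k-1}{3}\right\rceil \;\le\; \left\lceil\frac{|V(G-A)|+\chi(G-A)-1}{3}\right\rceil,
\]
and the reverse inequality is immediate from $|V(G-A)|\le n$ and $\chi(G-A)\le k$. Equality follows.

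The only delicate step is identifying which term achieves the maximum in the definition of $r$; this is exactly where the list-size lower bound $|L(v)|\ge k+1$ from Remark~\ref{mincex} is crucial, since $\chi(G-A)$ is at most $k$ (it is either $k$ or, if $A$ spans an entire part, $k-1$). Once that is settled, the rest is a short chain of ceiling comparisons.
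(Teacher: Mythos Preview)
Your proof is correct and follows essentially the same approach as the paper: delete a common color $c$, invoke minimality on $G-A$ with the reduced lists $L'$, and use the list-size lower bound $|L(v)|\ge k+1$ from Remark~\ref{mincex} to rule out the $\chi(G-A)$ branch of the maximum. The only cosmetic difference is that you phrase the contradiction by extracting a witness vertex with $|L'(v)|<r$, whereas the paper verifies directly that $|L'(v)|\ge\max\{\chi(G'),\lceil(|V(G')|+\chi(G')-1)/3\rceil\}$ under the assumed strict inequality and then applies $r$-choosability.
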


\begin{proof}
Let $c\in\bigcap_{v\in A}L(v)$.  Let $G'=G-A$, and let $L'$ be the list
assignment for $G'$ obtained from $L$ by deleting $c$ from each list containing
it.

Since $|L(v)|\ge k+1$, we have $|L'(v)|\ge k\ge\chi(G')$ for all $v\in V(G')$.
Also $|L'(v)|\ge \nkup-1$.
If $\CL{\FR{|V(G')|+\chi(G')-1}3}<\nkup$, then
$|L'(v)|\ge\max\left\{\chi(G'),\CL{\FR{|V(G')|+\chi(G')-1}3}\right\}$.  By the 
minimality of $G$, we obtain an $L'$-coloring of $G'$, which extends to an
$L$-coloring of $G$ by giving color $c$ to $A$.  Hence
$\CL{\FR{|V(G')|+\chi(G')-1}3}\ge\nkup$, and $G'\esub G$ yields equality.
\end{proof}

\begin{cor} \label{no2}
The lists on a part of size $2$ in $G$ are disjoint.
\end{cor}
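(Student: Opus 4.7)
The plan is to apply Proposition~\ref{min} directly to the part of size $2$ and derive a numerical contradiction.

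Suppose for contradiction that some part $A$ of size $2$ in $G$ has lists sharing a color $c$. Since $G$ is a complete $k$-partite graph, $A$ is a stable set, so Proposition~\ref{min} applies with this $A$. The key observation is that removing an entire part from a complete $k$-partite graph reduces the chromatic number by exactly $1$: thus $|V(G-A)|=n-2$ and $\chi(G-A)=k-1$.

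Plugging these values into the conclusion of Proposition~\ref{min}, I would get
\[
\left\lceil\frac{(n-2)+(k-1)-1}{3}\right\rceil = \left\lceil\frac{n+k-1}{3}\right\rceil,
\]
that is, $\lceil(n+k-4)/3\rceil = \lceil(n+k-1)/3\rceil$. But the two numerators differ by exactly $3$, so the two ceilings must differ by exactly $1$, a contradiction.

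There is really no obstacle here beyond checking the two arithmetic facts ($\chi$ drops by $1$ when deleting a part, and $\lceil x/3\rceil$ strictly increases when $x$ grows by $3$); the whole content of the corollary is packaged into Proposition~\ref{min}.
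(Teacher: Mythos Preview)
Your proof is correct and follows exactly the paper's approach: assume a shared color on a $2$-part $A$, invoke Proposition~\ref{min}, and observe that $\left\lceil\frac{(n-2)+(k-1)-1}{3}\right\rceil<\left\lceil\frac{n+k-1}{3}\right\rceil$ since the numerators differ by~$3$. The paper's proof is just a one-line version of what you wrote.
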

\begin{proof}
A shared color in a part $A$ of size $2$ contradicts Proposition~\ref{min} via 
$\CL{\FR{V(G-A)+\chi(G-A)-1}3}=\CL{\FR{(n-2)+(k-1)-1}3}<\nkup$.
\end{proof}

\begin{cor}\label{no3}
Each color appears in at most two lists in each part in $G$.
\end{cor}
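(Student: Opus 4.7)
The plan is to argue by contradiction, mirroring the proof of Corollary~\ref{no2}. I would suppose some color $c$ lies in the lists of three vertices $v_1,v_2,v_3$ belonging to a common part $P$ of $G$, and set $A=\{v_1,v_2,v_3\}$. Since $P$ is a stable set, so is $A$, and by assumption $c\in\bigcap_{v\in A}L(v)$. Proposition~\ref{min} then forces
\[\left\lceil\FR{|V(G-A)|+\chi(G-A)-1}3\right\rceil=\nkup.\]

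The next step is to show this equality fails, which will be the contradiction. Clearly $|V(G-A)|=n-3$. For $\chi(G-A)$ I would split on $|P|$: if $|P|>3$, then $P\setminus A$ is nonempty so $G-A$ remains complete $k$-partite and $\chi(G-A)=k$; if $|P|=3$ (the smallest possibility since $A\esub P$), then $G-A$ is complete $(k-1)$-partite and $\chi(G-A)=k-1$. In the first case the numerator on the left is $n+k-4$, exactly $3$ less than $n+k-1$, so the ceiling drops by exactly $1$. In the second case the numerator is $n+k-5$, which is $4$ less than $n+k-1$, so the ceiling drops by $1$ or $2$. Either way strict inequality holds, contradicting Proposition~\ref{min}.

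The only delicate point is the second case, where one must verify that decreasing the numerator by $4$ always strictly decreases $\lceil\,\cdot\,/3\rceil$; this is immediate since any four consecutive integers cover all three residues modulo $3$, forcing the ceiling to decrease. No serious obstacle remains: all the real work was done in Proposition~\ref{min}, and this corollary is essentially a numerical consequence parallel to Corollary~\ref{no2}.
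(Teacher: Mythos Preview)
Your proof is correct and follows essentially the same approach as the paper: apply Proposition~\ref{min} to the three vertices sharing a common color and check that the ceiling strictly decreases. The only difference is cosmetic---the paper avoids your case split by simply using $\chi(G-A)\le k$ to obtain $\CL{\FR{|V(G-A)|+\chi(G-A)-1}3}\le\CL{\FR{(n-3)+k-1}3}<\nkup$ directly.
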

\begin{proof}
Having three vertices with a common color in a part $A$  contradicts
Proposition~\ref{min} via
$\CL{\FR{|V(G-A)|+\chi(G-A)-1}3}\le \CL{\FR{(n-3)+k-1}3}<\nkup$.
\end{proof}

\begin{lem}\label{alpha}
$\alpha(G)\leq 4$. 
\end{lem}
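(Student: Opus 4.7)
Suppose for contradiction that some part $A$ of $G$ has $|A|\ge 5$.

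I would first rule out $|A|\ge 6$ by counting.  By Corollary~\ref{no3}, every color lies in at most two lists of vertices in $A$, so $\bigl|\bigcup_{v\in A}L(v)\bigr|\ge |A|\nkup/2\ge 3\nkup\ge n+k-1>n$, contradicting $\bigl|\bigcup_{v\in A}L(v)\bigr|\le\Lunion<n$ from Lemma~\ref{<n}.  Hence $|A|=5$.

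I would then analyze the \emph{shared-color graph} $H$ on $V(A)$ defined by $v_iv_j\in E(H)$ iff $L(v_i)\cap L(v_j)\neq\nul$, and argue that $H$ contains a matching of size at least $2$.  A graph on $5$ vertices with matching number $\le 1$ is a subgraph of $K_{1,4}$ or equals $K_3$ (together with isolated vertices).  In the star case, four of the five lists in $A$ are pairwise disjoint, so $\bigl|\bigcup_{v\in A}L(v)\bigr|\ge 4\nkup\ge 4(n+k-1)/3>n$.  In the triangle case on $\{v_1,v_2,v_3\}$ with $v_4,v_5$ isolated in $H$, the lists $L(v_4)$ and $L(v_5)$ are pairwise disjoint and disjoint from $L(v_1)\cup L(v_2)\cup L(v_3)$; since $L(v_1)\cap L(v_2)\cap L(v_3)=\nul$ by Corollary~\ref{no3}, double counting (each color of $L(v_1)\cup L(v_2)\cup L(v_3)$ is counted at most twice in $\sum_{i=1}^{3}|L(v_i)|$) yields $|L(v_1)\cup L(v_2)\cup L(v_3)|\ge\tfrac{3}{2}\nkup$, and hence $\bigl|\bigcup_{v\in A}L(v)\bigr|\ge\tfrac{7}{2}\nkup>n$.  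Either case contradicts the Small Pot Lemma.

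So $H$ has a matching of size $2$, giving disjoint pairs $B_1=\{v_1,v_2\}$ and $B_2=\{v_3,v_4\}$ in $A$ with common colors $c_1\in L(v_1)\cap L(v_2)$ and $c_2\in L(v_3)\cap L(v_4)$; Corollary~\ref{no3} forces $c_1\neq c_2$ and also $c_1,c_2\notin L(v_5)$.  Applying Proposition~\ref{min} to $B_1$ yields $n+k\equiv 1\pmod 3$, whence $\CL{((n-4)+k-1)/3}=\nkup-1$.  I would then precolor $B_i$ with $c_i$, pass to $G'=G-(B_1\cup B_2)$ with $L'(v)=L(v)\setminus\{c_1,c_2\}$, and invoke the minimality of $G$.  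Since $\chi(G')=k$ (the surviving vertex $v_5$ keeps $A$ nonempty) and $|L'(v_5)|=|L(v_5)|\ge\nkup$, the only condition left is $|L'(v)|\ge\nkup-1$ for every $v\in V(G')\setminus\{v_5\}$.

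The main obstacle is this last verification: it fails exactly when $\{c_1,c_2\}\esub L(v)$ for some $v\notin A$.  The plan is to show that the matching $\{B_1,B_2\}$ and the shared colors $c_1,c_2$ can be chosen so that no such bad $v$ exists, exploiting the freedom provided by the multiple choices of matchings in $H$ and of shared colors on each pair, together with the per-part bound of Corollary~\ref{no3} limiting how often a given pair of colors can co-occur in lists outside $A$.  Once this verification succeeds, minimality of $G$ produces an $L'$-coloring of $G'$ that combines with the precoloring of $B_1\cup B_2$ to yield an $L$-coloring of $G$, contradicting the assumption that $G$ is a counterexample.
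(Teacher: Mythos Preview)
Your approach has a genuine gap in the $|A|=5$ case, and you identify it yourself: you cannot guarantee that the matching $\{B_1,B_2\}$ and shared colors $c_1,c_2$ can be chosen so that no vertex $v\notin A$ has $\{c_1,c_2\}\subseteq L(v)$. The appeal to ``freedom'' and Corollary~\ref{no3} does not close this. Corollary~\ref{no3} only bounds how often a \emph{single} color appears within \emph{one} part; it says nothing about co-occurrence of a \emph{pair} of colors in lists across the rest of the graph. There is no evident mechanism preventing every admissible choice of $(B_1,B_2,c_1,c_2)$ from being blocked by some bad vertex, so the plan as stated is incomplete.

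The paper's proof sidesteps all of this with a one-line pigeonhole observation. The double-counting you already use for $|A|\ge 6$ in fact yields, for every stable set $A$,
\[
|A|\cdot\frac{n+k-1}{3}\le\sum_{v\in A}|L(v)|\le 2\Lunion\le 2(n-1),
\]
so $|A|\le 6(n-1)/(n+k-1)<6$, giving $|A|\le 5$. If equality holds for some $A$, then $5(n+k-1)\le 6(n-1)$, i.e.\ $n\ge 5k+1$; but then the average part size exceeds $5$, forcing a part of size at least $6$ and contradicting the bound just proved. No shared-color graph, no second invocation of minimality, and no delicate choice of $c_1,c_2$ is needed.
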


\begin{proof}
Let $A$ be a stable set in $G$. By Lemma~\ref{no3}, each color appears in
at most two lists on $A$, so $\sum_{v\in A}|L(v)|\le 2\Lunion\le 2(n-1)$,
by Lemma~\ref{<n}.  Also, $\sum_{v\in A}|L(v)|\ge |A|\nkup$.  Together, the
inequalities yield $|A|\le 6\FR{n-1}{n+k-1}$, so $|A|\le 5$.  If equality
holds, then $n\ge 5k+1$, which requires a part of size at least $6$ and is
already forbidden.
\end{proof}

The restrictions so far simplify the main approach.  The remaining restrictions
in this section are technical statements used to simplify the arguments in
Sections~\ref{greed} and~\ref{additional} that $V(G)$ admits a partition
satisfying the properties specified in Section~\ref{outline}.  We obtain them
here as further consequences of Proposition~\ref{min}.

\begin{lem}\label{nk13}
$\FR{n+k-1}3$ is an integer.
%and hence $n\ge 2k+4$.
\end{lem}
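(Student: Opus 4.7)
The plan is to apply Proposition~\ref{min} to a two-vertex stable set $A$ lying inside some part of size at least three, turning the resulting ceiling equality into a modular constraint on $n+k-1$.

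First I would locate a part of size at least $3$. Since Remark~\ref{mincex} gives $n\geq 2k+2$ while $G$ has only $k$ parts, some part $P$ satisfies $|P|\geq 3$.

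Next I would produce two vertices of $P$ whose lists share a color. Pick any three vertices $u,v,w\in P$. Remark~\ref{mincex} gives each list size at least $\lceil(n+k-1)/3\rceil$, so if $L(u), L(v), L(w)$ were pairwise disjoint then $|L(u)\cup L(v)\cup L(w)|\geq 3\lceil(n+k-1)/3\rceil\geq n+k-1$. Since $k\geq 1$, this exceeds the bound $\Lunion\leq n-1$ granted by Lemma~\ref{<n}, a contradiction. Hence two of these lists, say $L(u)$ and $L(v)$, share a color, so $A=\{u,v\}$ is a stable set meeting the hypothesis of Proposition~\ref{min}.

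Finally I would invoke Proposition~\ref{min}. Because $|P|\geq 3$, removing $A$ leaves at least one vertex of $P$, so $\chi(G-A)=k$ and $|V(G-A)|=n-2$, and the proposition gives $\lceil(n+k-3)/3\rceil=\lceil(n+k-1)/3\rceil$. Checking the three residues of $n+k-1$ modulo $3$ shows this equality holds only when $n+k-1\equiv 0\pmod 3$, which is the claim. The main obstacle is really just the initial pigeonhole step producing the shared color; the modular check is then automatic.
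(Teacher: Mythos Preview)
Your proof is correct and follows essentially the same route as the paper: locate a part of size at least $3$, use the Small Pot Lemma to force two of its lists to intersect, and apply Proposition~\ref{min} to the resulting pair to obtain the ceiling equality that pins down the residue of $n+k-1$. The only cosmetic difference is that the paper takes a largest part $A$ and argues that \emph{all} lists on $A$ cannot be pairwise disjoint (via $n\le k|A|\le\sum_{v\in A}|L(v)|<n$), whereas you pick three vertices and bound $3\lceil(n+k-1)/3\rceil$ directly; both arguments end at the same two-vertex set and the same invocation of Proposition~\ref{min}.
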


\begin{proof}
Let $A$ be a largest part, so $n\leq k|A|$.  If the lists on $A$ are
disjoint, then 
\[n\leq k|A|\leq\sum_{v\in A}|L(v)|
=\Big|\bigcup_{v\in A}L(v)\Big|\leq\Big|\bigcup_{v\in V(G)}L(v)\Big|<n.\]
Hence $A$ contains a $2$-set $A'$ with intersecting lists, which by
Corollary~\ref{no3} and $n>2k$ is not all of $A$.  Now
\[\CL{\FR{|V(G-A')|+\chi(G-A')-1}3}\le\CL{\FR{(n-2)+k-1}3}=\FL{\FR{n+k-1}3}.\]
If $\FR{n+k-1}3\notin\ZZ$, then this contradicts Proposition~\ref{min}.
%
%Since also $n\ge2k+2$, we conclude $n\ge2k+4$.
\end{proof}

Henceforth let $k_i$ be the number of parts with size $i$, for
$i\in\{1,2,3,4\}$.  Note that $k=\SE i14 k_i$ and $n=\SE i14 ik_i$.

\begin{cor}\label{arith}
%$2k_4+k_3\geq k_1+4$ and $k_4+k_1-k_3\equiv2\pmod3$.
The parameters $k_1,k_2,k_3,k_4$ satisfy the following relationships.\\
(a) $\FR{n+k-1}3=k+k_4-\FR{k_1-k_3+k_4+1}3$, with both fractions being integers.\\
(b) $\FR{n+k-1}3+\FR k3\ge k+k_4+\FR{2k_3-1}3$.\\
(c) $\FR{2(n+k-1)}3=n+\FR{k_1-k_3-2k_4-2}3=k+k_3+2k_4+\FR{k+2k_2+k_3-2}3$.
\end{cor}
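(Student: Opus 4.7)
The plan is to verify each of the three identities by direct substitution, using $k=k_1+k_2+k_3+k_4$ and $n=k_1+2k_2+3k_3+4k_4$, so that $n+k-1=2k_1+3k_2+4k_3+5k_4-1$. For part (a), I would compute
\[
k+k_4-\FR{k_1-k_3+k_4+1}{3}=\FR{3(k_1+k_2+k_3+k_4)+3k_4-(k_1-k_3+k_4+1)}{3}=\FR{2k_1+3k_2+4k_3+5k_4-1}{3},
\]
which equals $(n+k-1)/3$. Lemma~\ref{nk13} tells us $(n+k-1)/3\in\ZZ$, and since $k+k_4\in\ZZ$, the fraction $(k_1-k_3+k_4+1)/3$ must be an integer as well, giving the integrality assertion.

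For part (b), adding $k/3$ to the expression in (a) yields $(n+2k-1)/3=(3k_1+4k_2+5k_3+6k_4-1)/3$, while $k+k_4+(2k_3-1)/3=(3k_1+3k_2+5k_3+6k_4-1)/3$. The inequality between them reduces after clearing denominators to $k_2\ge 0$, which is trivial. For part (c), I would expand both proposed expressions for $2(n+k-1)/3$. The first equals
\[
n+\FR{k_1-k_3-2k_4-2}{3}=\FR{3n+k_1-k_3-2k_4-2}{3}=\FR{4k_1+6k_2+8k_3+10k_4-2}{3},
\]
and the second likewise simplifies using $3k=3k_1+3k_2+3k_3+3k_4$ to
\[
k+k_3+2k_4+\FR{k+2k_2+k_3-2}{3}=\FR{4k_1+6k_2+8k_3+10k_4-2}{3},
\]
both agreeing with $2(n+k-1)/3$.

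There is no real obstacle here: the entire corollary is a bookkeeping exercise once Lemma~\ref{nk13} is in hand, and the only nontrivial content is that Lemma~\ref{nk13} forces the divisibility in (a). I would present the three verifications as short computations, taking care to note the integrality step in (a) explicitly since this is the only place where Lemma~\ref{nk13} enters.
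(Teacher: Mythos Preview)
Your proof is correct and takes essentially the same approach as the paper: both verify each identity by direct substitution using $k=\sum k_i$ and $n=\sum ik_i$, and invoke Lemma~\ref{nk13} for the integrality claim in (a). The paper's write-up is terser (for (b) it phrases the reduction as ``use $k\ge k_1+k_3+k_4$,'' which is exactly your $k_2\ge0$), but there is no substantive difference.
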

\begin{proof}
%Lemma~\ref{nk13} and the formulas for $n$ and $k$ yield
%$2k_4+k_3-k_1=n-2k\ge4$.
(a) $n+k=2k_1+3k_2+4k_3+5k_4=3k-k_1+k_3+2k_4$.  Integrality was shown in
Lemma~\ref{nk13}.

(b) Use $k\ge k_1+k_3+k_4$ in the right side of (a).

(c) $2(n+k)=4k_1+6k_2+8k_3+10k_4 = 4k+2k_2+4k_3+6k_4$.
%For the second claim, Lemma~\ref{nk13} yields $2n+2k\equiv 2\pmod3$, but
%also $2k+2n\equiv 2k-n\equiv k_4+k_1-k_3\pmod 3$. 
\end{proof}

\section{A Sufficient Condition for $L$-Coloring} \label{outline}
By Corollary~\ref{no3}, each color appears in at most two lists in each part.
Therefore, an $L$-coloring must refine the partition of $V(G)$ into stable sets
of size at most $2$.  To find an $L$-coloring, we must determine which pairs
will form the color classes of size $2$.

\begin{defn}
\emph{Merging} non-adjacent vertices $u$ and $v$ in $G$ means replacing $u$ and
$v$ by one vertex $w$ with list $L(w)$ equal to $L(u)\cap L(v)$.  We then say
that $w$ is a \emph{merged} vertex.  Given a set $S$ of vertices, each of which
may be merged or unmerged, let $L(S)=\bigcup_{v\in S}L(v)$.
\end{defn}

A \emph{system of distinct representatives (SDR)} for a family
$\{X_1,\dots,X_m\}$ of sets is a set $\{x_1,\dots,x_m\}$ of distinct elements
such that $x_i\in X_i$ for $i\in\{1,\dots,m\}$.  Our goal is to perform some
merges so that the resulting color lists have an SDR; assigning each vertex the
chosen representative of its list (with the color chosen for a merged vertex
used on both original vertex comprising it) then yields an $L$-coloring of $G$.
To facilitate finding an SDR, it is natural to merge vertices whose lists 
have many common colors.  Merged vertices come from the same part in $G$ and
will only merge two previously unmerged vertices, since by Corollary~\ref{no3}
a merge of three vertices would have an empty list of colors.

In this section we prove that if the merging procedure satisfies the properties
in Definition~\ref{rich}, then the desired SDR exists.  The proof uses
Hall's Theorem~\cite{Hall}, which states that $\{X_1,\dots,X_m\}$ has an SDR if
and only if $\big|\bigcup_{i\in S}X_i\big|\ge|S|$ for all $S\esub\{1,\dots,m\}$.
The proof of Theorem~\ref{main} will be completed in Sections~\ref{greed}
and~\ref{additional} by showing that merges can be performed to establish
these properties.

\begin{defn}\label{rich}
Let a \emph{$j$-part} be a part having size $j$ in $G$ (before merges).
After performing merges, let $A^*$ denote the set of vertices resulting from
the part $A$.  Let $t_3$ denote the number of $3$-parts having merged vertices.
Let $Z_3$ be some fixed set of $\FL{\FR23 k_3}$ $3$-parts, and let $Z_4$ be
some fixed set of $\max\{0,\FR{k_1-k_3+k_4+1}3\}$ $4$-parts.  For a set of
merges, we define properties (P1)--(P8) below.  Note that (P3)-(P7) can be
considered for individual parts.

\smallskip
(P1) $t_3\ge{k_3/3}$.

(P2) In every $4$-part, at least one merge occurs.

(P3) If $x,y,z\in A^*$ are distinct, then 
$|L(x)\cup L(y)\cup L(z)|\ge n-t_3-k_4$.

(P4) If $|A^*|=|A|=3$ and $x,y\in A^*$, then
$|L(x)\cup L(y)|\ge k+k_3+k_4$.

(P5) If $A\in Z_3$ and $x,y\in A^*$, then
$|L(x)\cup L(y)|\ge k+t_3+k_4$.

(P6) If $|A|=3$ and $x,y\in A^*$, then
$|L(x)\cup L(y)|\ge k+{\FR{k_3}3}+k_4$.

(P7) If $A\in Z_4$ and $x,y\in A^*$, then
$|L(x)\cup L(y)|\ge k+k_4$.

(P8) The set of lists of merged vertices has an SDR.
\end{defn}

In the specification of $Z_4$, note that $\FR{k_1-k_3+k_4+1}3$ is an integer, by
Corollary~\ref{arith}(a).  Property (P3) applies to $3$-parts without merges
and to $4$-parts with one merge.  To understanding the intuition behind using
Hall's Theorem to show that these properties are sufficient, note that the
lower bounds in (P3)--(P7) are successively weaker (the comparison of the
bounds in (P5) and (P6) uses (P1)).  A large set $S$ must contain vertices
whose lists are large, thereby satisfying $|L(S)|\ge |S|$ and allowing such
sets to be excluded.  As the remaining sets to be considered become smaller
by eliminating such vertices, smaller lower bounds on the list sizes become
sufficient.  Property (P8) can then be viewed as reducing the problem to
finding an SDR of a smaller family (generated by the merged vertices) when
(P1)--(P7) hold.

\begin{lem}\label{P1P8}
When the merges satisfy (P1)-(P8), the family of all resulting lists has an
SDR.
\looseness-1
\end{lem}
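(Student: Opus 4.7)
My plan is to apply Hall's theorem to the family of post-merge lists. Suppose for contradiction that no SDR exists; then by Hall's theorem there is a minimum-cardinality set $S$ of post-merge vertices with $|L(S)| \le |S|-1$. Write $T = L(S)$ and $S_A = S \cap A^*$ for each part $A$. By Lemma~\ref{alpha} every part has size at most $4$, and (P2) forces at least one merge in every $4$-part, so $|A^*| \le 3$ throughout.

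The first step is to rule out $|S_A| \ge 3$. Let $m$ denote the total number of merges. Combining (P1) with (P2) gives $m \ge t_3 + k_4$, so the total number of post-merge vertices is $n - m \le n - t_3 - k_4$, and hence $|S| \le n - t_3 - k_4$. If some $|S_A| = 3$, then (P3) yields $|T| \ge |L(S_A)| \ge n - t_3 - k_4 \ge |S|$, contradicting $|T| \le |S|-1$. So $|S_A| \le 2$ for every part $A$. Property (P8) supplies Hall's condition on every subfamily of merged-vertex lists, so $S$ must contain at least one unmerged vertex; moreover, the minimality of $S$ forces $L(v) \esub L(S \setminus \{v\})$ for every $v \in S$, so every color of $T$ appears in at least two of the lists from $S$.

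The remaining work is a case analysis on how the parts with $|S_A|=2$ distribute among the types: $2$-parts (on which Corollary~\ref{no2} gives disjoint lists), $3$-parts with no merges (controlled by (P4)), $3$-parts with a merge (controlled by (P6) in general and strengthened to (P5) when $A \in Z_3$), and $4$-parts (controlled by (P7) when $A \in Z_4$). In each configuration one chooses the strongest applicable lower bound on some $|L(S_A)|$ and combines it with the upper bound $|S| \le n-m$, the list-size bound $|L(v)| \ge \nkup$, and the arithmetic identities from Corollary~\ref{arith} to force $|T| \ge |S|$ after all, contradicting $|T| \le |S|-1$.

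The main obstacle is this final case analysis. The bounds in (P4)--(P7) are calibrated to be just tight, and the hardest configurations are those with many $|S_A|=2$ parts but few in the strong-bound classes $Z_3$ or $Z_4$; in those cases no single-part bound suffices, and one must aggregate weaker bounds across parts via the double-counting identity $\sum_{v \in S} |L(v)| = \sum_{c \in T} \deg_S(c)$ together with $\deg_S(c) \ge 2$ (from the minimality of $S$) and the fact that each color appears in at most two lists per part (Corollary~\ref{no3}).
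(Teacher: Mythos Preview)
Your setup and the first reduction (ruling out $|S_A|\ge 3$ via (P3)) are correct and match the paper. The gap is in what follows: you never actually carry out the case analysis, and the strategy you sketch for it is off target. You say that ``no single-part bound suffices'' in the hard cases and propose to aggregate bounds via double-counting with $\deg_S(c)\ge 2$. In fact a single pair always suffices; the mechanism you are missing is a \emph{descending chain} of upper bounds on $|S|$.

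Here is how the paper's argument runs after restricting to $|S_A|\le 2$. At that point $|S|\le k+k_2+k_3+k_4\le 2k$. If $S$ contains both vertices of some $2$-part, Corollary~\ref{no2} gives $|L(S)|\ge 2k+2>|S|$; so we may assume it does not, and now $|S|\le k+k_3+k_4$. If $S$ contains two vertices of some unmerged $3$-part, (P4) gives $|L(S)|\ge k+k_3+k_4\ge|S|$; so we may assume not, and now $|S|\le k+t_3+k_4$ (only the $t_3$ merged $3$-parts can contribute two). If $S$ contains two vertices of some $3$-part in $Z_3$, (P5) gives $|L(S)|\ge k+t_3+k_4\ge|S|$; so we may assume not, and since $k_3-|Z_3|=\lceil k_3/3\rceil$ we get $|S|\le k+\lceil k_3/3\rceil+k_4$. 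If $S$ contains two vertices of any $3$-part, (P6) gives $|L(S)|\ge k+k_3/3+k_4\ge|S|$; so we may assume not, and $|S|\le k+k_4$. If $S$ contains two vertices of some $4$-part in $Z_4$, (P7) gives $|L(S)|\ge k+k_4\ge|S|$; so we may assume not, and then $|S|\le k+k_4-|Z_4|\le\frac{n+k-1}{3}$ by Corollary~\ref{arith}(a). Now any single unmerged vertex in $S$ has list size at least $\frac{n+k-1}{3}\ge|S|$, so $S$ consists entirely of merged vertices and (P8) finishes.

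The point is that the lower bounds in (P4)--(P7) are not ``just tight'' against the crude bound $|S|\le n-m$; each one is calibrated to match the upper bound on $|S|$ obtained by excluding the \emph{previous} type of pair. Your double-counting idea with $\deg_S(c)\ge 2$ is not needed and would not by itself yield these numerics.
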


\begin{proof}
By Hall's Theorem, it suffices to prove $|L(S)|\ge|S|$ for each vertex set $S$
(after the merges).  We use (P1)--(P7) to restrict $S$ until it consists only
of merged vertices, and then (P8) guarantees $|L(S)|\ge|S|$ for such $S$.

By (P2), the merges leave at most $n-t_3-k_4$ vertices, so $|S|\le n-t_3-k_4$.
Thus (P3) yields $|L(S)|\ge|S|$ whenever $S$ has three vertices from one part.
We may thus restrict $S$ to having at most two vertices from each part,
yielding $|S|\le k+k_2+k_3+k_4\le 2k$.

If $S$ contains both vertices from a part of size $2$ (unmerged), then by
Corollary~\ref{no2} their lists are disjoint and $|L(S)|\ge 2k+2>|S|$;
hence $|S|\le k+k_3+k_4$.  If $S$ contains two vertices from a $3$-part with no
merged vertices, then (P4) yields $|L(S)|\ge k+k_3+k_4$; hence
$|S|\le k+t_3+k_4$.  If $S$ contains two vertices from a $3$-part in $Z_3$,
then (P5) yields $|L(S)|\ge k+t_3+k_4$; hence $|S|\le k+\CL{\FR{k_3}3}+k_4$,
since $\CL{\FR{k_3}3}=k_3-|Z_3|$.  If $S$ contains two vertices from any
$3$-part, then (P6) yields $|L(S)|\ge k+\CL{\FR{k_3}3}+k_4$; hence
$|S|\le k+k_4$.  If $S$ contains two vertices from a $4$-part in $Z_4$, then 
(P7) yields $|L(S)|\ge k+k_4$; hence $|S|\le k+k_4-|Z_4|\le\FR{n+k-1}3$, by
Corollary~\ref{arith}(a) and the formula for $|Z_4|$.  Now $|L(S)|\ge|S|$ if
$S$ contains any unmerged vertex.  Finally, (P8) applies.
\end{proof}

In the rest of the paper, we describe an explicit procedure to obtain such
merges.

\section{Greedy Merges} \label{greed} 
In order to guarantee (P3)--(P8), it is helpful to merge vertices whose lists
have large intersection.  Our first task is to make the meaning of ``large''
precise.

\begin{defn}\label{defgood}
For a part $A$ in $G$, let
$\ell(A)=\max\left\{|L(u)\cap L(v)|\st u,v\in A\right\}$.\\
If $|A|\ge3$, then a pair $\{u,v\}\esub A$ is a \emph{good pair} for $A$ if

$|A|=3$ and $|L(u)\cap L(v)|\geq \frac{k_1+k_4+1}{3}$,
or if

$|A|=4$ and $|L(u)\cap L(v)|\geq |L(w)\cap L(z)|$, where $\{w,z\}=A-\{u,v\}$.

\noindent
The merge of a good pair is a \emph{good merge}.  A part $A$ is \emph{good} if
a good merge is made in it.
\end{defn}

When $|A|=4$, a pair in $A$ whose lists have largest intersection is good by
definition.  With a lower bound on $\ell(A)$, this will also hold when $|A|=3$.

\begin{lem}
\label{geqk}
If $T\subseteq V(G)$ is a stable set of size $3$, then
$\sum_{\{u,v\}\in \CH T2}|L(u)\cap L(v)|\geq k$.
\end{lem}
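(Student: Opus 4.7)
The plan is to use inclusion-exclusion on the three lists and then invoke the two bounds already available: the lower bound $|L(v)|\ge\nkup$ on each list size (from Remark~\ref{mincex}) and the upper bound $\Lunion\le n-1$ on the union of all lists (the Small Pot Lemma, Lemma~\ref{<n}).

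Specifically, write $T=\{x,y,z\}$. I would first recall the identity
\[
\sum_{\{u,v\}\in\binom{T}{2}}|L(u)\cap L(v)|
=|L(x)|+|L(y)|+|L(z)|+|L(x)\cap L(y)\cap L(z)|-|L(x)\cup L(y)\cup L(z)|,
\]
which follows from the standard three-set inclusion-exclusion formula rearranged. Then I would estimate the three pieces on the right-hand side separately.

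For the sum of individual list sizes, I would use that each $|L(v)|\ge\nkup$, so $|L(x)|+|L(y)|+|L(z)|\ge 3\nkup\ge n+k-1$. For the triple intersection, I would simply use nonnegativity. For the union term, I would observe $L(x)\cup L(y)\cup L(z)\subseteq\bigcup_{v\in V(G)}L(v)$, and invoke the Small Pot reduction (Lemma~\ref{<n}, applied to our minimal counterexample) to get $|L(x)\cup L(y)\cup L(z)|\le n-1$. Combining the three estimates yields
\[
\sum_{\{u,v\}\in\binom{T}{2}}|L(u)\cap L(v)|\ \ge\ (n+k-1)+0-(n-1)\ =\ k,
\]
as required.

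I do not anticipate a serious obstacle: the lemma is essentially a one-line consequence of inclusion-exclusion once the right two global bounds are in place. The only things to be careful about are (i) using the sharpened lower bound $\nkup$ (not just $k+1$) so that $3\nkup\ge n+k-1$, and (ii) that the union bound from Lemma~\ref{<n} is strict, which gives the needed $-(n-1)$ rather than $-n$.
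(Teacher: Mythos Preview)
Your proof is correct and follows essentially the same route as the paper: apply inclusion--exclusion to the three lists, bound the sum of list sizes below by $3\cdot\frac{n+k-1}{3}$, and bound the union above by $n-1$ via the Small Pot Lemma. The only cosmetic difference is that the paper invokes Corollary~\ref{no3} to note that the triple intersection is actually zero (since $T$ lies inside a single part in a complete $k$-partite graph), writing the identity as an exact equality $\sum_{\{u,v\}}|L(u)\cap L(v)|=\sum_{u\in T}|L(u)|-|L(T)|$, whereas you use only the nonnegativity of the triple intersection; either way the arithmetic gives $k$.
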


\begin{proof}
Since colors appear at most twice in each part, and $|L(V(G))| <n$, we have
\[\sum_{\{u,v\}\in \CH T2}|L(u)\cap L(v)| = \sum_{u\in T}|L(u)|-|L(T)|
\ge 3\left(\frac{n+k-1}{3}\right) - (n-1) = k.\]

\vspace{-3.3pc}
\end{proof}
\bigskip

\begin{cor} \label{k/3}
If $|A|\geq 3$, then $\ell(A)\geq\frac{k}{3}$. 
\end{cor}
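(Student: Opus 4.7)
The plan is to apply Lemma~\ref{geqk} to any stable $3$-subset of $A$ and then use an averaging argument on the three pairwise intersections.

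Pick any $T \subseteq A$ with $|T| = 3$; since $A$ is a part of $G$, the set $T$ is stable. Lemma~\ref{geqk} then gives
\[
\sum_{\{u,v\}\in \CH T2}|L(u)\cap L(v)|\ \geq\ k.
\]
Since the sum on the left has exactly three terms, at least one of them must be at least $k/3$. That pair $\{u,v\}$ witnesses $\ell(A)\geq |L(u)\cap L(v)|\geq k/3$, since $u,v\in A$.

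This handles both cases $|A|=3$ and $|A|=4$ (Lemma~\ref{alpha} rules out larger parts), because in the latter case any choice of three vertices from $A$ still yields a stable set to which Lemma~\ref{geqk} applies. No further obstacle is anticipated: the averaging is immediate once Lemma~\ref{geqk} is invoked, and no arithmetic about $k_1,k_3,k_4$ is needed for this corollary.
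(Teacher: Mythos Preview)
Your argument is correct and is exactly the intended one: the paper states this corollary immediately after Lemma~\ref{geqk} without proof, and the implicit derivation is precisely the pigeonhole/averaging step you wrote out. Nothing further is needed.
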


\begin{cor} \label{ellgood}
When $|A|\geq 3$, a pair $\{u,v\}\subseteq A$ maximizing $|L(u)\cap L(v)|$ is good for $A$.
\end{cor}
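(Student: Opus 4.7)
The plan is to split on the two cases in Definition~\ref{defgood} according to whether $|A|=4$ or $|A|=3$, and show that in each case the pair maximizing $|L(u)\cap L(v)|$ automatically meets the relevant good-pair threshold.

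For $|A|=4$, the corollary is essentially tautological: if $\{u,v\}\esub A$ maximizes $|L(u)\cap L(v)|$ over all pairs in $A$, then for $\{w,z\}=A-\{u,v\}$ we have $|L(u)\cap L(v)|\ge|L(w)\cap L(z)|$ by maximality, which is exactly the condition in Definition~\ref{defgood} for a good pair when $|A|=4$.

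For $|A|=3$, the good-pair condition requires $|L(u)\cap L(v)|\ge (k_1+k_4+1)/3$. Corollary~\ref{k/3} already supplies the lower bound $\ell(A)\ge k/3$, so it suffices to verify $k\ge k_1+k_4+1$. Since $|A|=3$ forces at least one part of size $3$, we have $k_3\ge 1$, and then $k=k_1+k_2+k_3+k_4\ge k_1+k_4+1$. Combining, the maximizing pair satisfies $|L(u)\cap L(v)|=\ell(A)\ge k/3\ge (k_1+k_4+1)/3$ and is therefore good.

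There is no real obstacle here; the only non-formal point is the observation that the existence of a part of size $3$ (needed to even discuss the $|A|=3$ case) gives $k_3\ge1$, which converts Corollary~\ref{k/3} into the threshold required by Definition~\ref{defgood}. The whole argument is essentially a one-line deduction from Corollary~\ref{k/3} together with $k\ge k_1+k_4+1$.
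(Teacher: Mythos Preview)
Your proof is correct and follows essentially the same approach as the paper's: the $|A|=4$ case is immediate from Definition~\ref{defgood}, and for $|A|=3$ you use $k_3\ge1$ together with Corollary~\ref{k/3} to obtain $\ell(A)\ge k/3\ge (k_1+k_4+1)/3$. The paper presents this more tersely, but the logic is identical.
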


\begin{proof}
When $|A|=4$, the conclusion is immediate from Definition~\ref{defgood}.  If
$|A|=3$, then $k_3\geq 1$.  Thus
$|L(u)\cap L(v)|=\ell(A)\geq\FR k3 \geq\frac{k_1+k_4+1}3$,
so $\{u,v\}$ is good for $A$.
\end{proof}

%A technical additional property will reduce what we need to prove.
%
%\begin{defn}\label{good}
%Property (Q1): In every part where at least one merge is made, a good pair is
%merged.
%%If exactly one pair in a part $A$ is merged, then it is a good pair.
%\end{defn}

\begin{lem} \label{reduce3}
Every good $3$-part $A$ satisfies (P6).
\end{lem}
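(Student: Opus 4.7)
The plan is as follows. Fix a good 3-part $A$, and let $\{u,v\}$ be the good pair that was merged, with $w$ the remaining unmerged vertex of $A$. A 3-part cannot admit two merges (any second merge would produce a merged vertex coming from three original vertices with a common color, contradicting Corollary~\ref{no3}), so $A^* = \{w, x\}$ where $x$ is the merged vertex with $L(x) = L(u) \cap L(v)$. Since $|A^*| = 2$, verifying (P6) reduces to showing the single inequality $|L(w) \cup L(x)| \ge k + \frac{k_3}{3} + k_4$.

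The key observation is that by Corollary~\ref{no3}, no color appears in all three lists $L(u), L(v), L(w)$; therefore $L(w) \cap (L(u) \cap L(v)) = \varnothing$, and consequently
\[
|L(w) \cup L(x)| \;=\; |L(w)| + |L(u) \cap L(v)|.
\]
Now $|L(w)| \ge \nkup$ by Remark~\ref{mincex}, and $|L(u) \cap L(v)| \ge \frac{k_1 + k_4 + 1}{3}$ because $\{u,v\}$ is a good pair for a 3-part (Definition~\ref{defgood}).

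It then remains to combine these estimates and simplify. Using Corollary~\ref{arith}(a), which gives $\frac{n+k-1}{3} = k + k_4 - \frac{k_1 - k_3 + k_4 + 1}{3}$, we compute
\[
\frac{n+k-1}{3} + \frac{k_1 + k_4 + 1}{3} \;=\; k + k_4 + \frac{k_3}{3},
\]
so $|L(w) \cup L(x)| \ge k + \frac{k_3}{3} + k_4$, as required.

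There is no real obstacle here: the lemma is essentially a bookkeeping consequence of Corollary~\ref{no3} (which makes the union in (P6) a disjoint union), the definition of a good pair in a 3-part (which supplies the extra $\frac{k_1+k_4+1}{3}$ colors needed beyond what Remark~\ref{mincex} already guarantees for $|L(w)|$), and the arithmetic identity in Corollary~\ref{arith}(a) that converts $\nkup$ into the form $k + k_4 - \frac{k_1 - k_3 + k_4 + 1}{3}$ so that the two lower bounds telescope into the claimed expression.
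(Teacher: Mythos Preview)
Your proof is correct and follows essentially the same approach as the paper: use Corollary~\ref{no3} to make $L(w)\cup L(x)$ a disjoint union, bound the two pieces by $\frac{n+k-1}{3}$ and $\frac{k_1+k_4+1}{3}$ via Remark~\ref{mincex} and Definition~\ref{defgood}, and simplify with Corollary~\ref{arith}(a). The only difference is cosmetic (variable names and a bit more detail about why $|A^*|=2$).
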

\begin{proof}
We have $A^*=\{x,y\}$ and may assume that $y$ is merged and $x$ is not.  By
Corollary~\ref{no3}, $L(x)\cap L(y)=\nul$, and forming $y$ by a good merge
yields
\[|L(x)\cup L(y)| = |L(x)|+|L(y)|
\geq\frac{n+k-1}{3}+{\FR{k_1+k_4+1}3} = k+{\frac{k_3}3}+k_4,\]
by Corollary~\ref{arith}(a).  Thus, the desired inequality holds.
\end{proof}

\begin{lem}\label{reduce4}
If (P1) holds and $A$ is a good $4$-part, then $A$ satisfies (P3).
\end{lem}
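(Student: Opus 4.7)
The plan is to analyze the three lists of $A^*$ after a single good merge in the 4-part $A=\{u,v,w,z\}$. Since (P3) is vacuous when $|A^*|\le 2$, I may assume exactly one merge occurs, on a good pair $\{u,v\}$, producing the merged vertex $y$ with $L(y)=L(u)\cap L(v)$ and $A^*=\{y,w,z\}$; the goal is $|L(y)\cup L(w)\cup L(z)|\ge n-t_3-k_4$.

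The crucial observation is that Corollary~\ref{no3} forbids any color from appearing in three lists of $A$, so $L(y)\cap L(w)=L(u)\cap L(v)\cap L(w)=\nul$ and similarly $L(y)\cap L(z)=\nul$. Applying inclusion-exclusion to $L(w)\cup L(z)$ then yields
\[
|L(y)\cup L(w)\cup L(z)|=|L(w)|+|L(z)|+\bigl(|L(y)|-|L(w)\cap L(z)|\bigr).
\]
By the goodness of the merge, $|L(y)|=|L(u)\cap L(v)|\ge|L(w)\cap L(z)|$, so the parenthesized term is nonnegative. Since each of $|L(w)|,|L(z)|$ is at least $\FR{n+k-1}3$ (an integer, by Lemma~\ref{nk13}), this gives $|L(y)\cup L(w)\cup L(z)|\ge\FR{2(n+k-1)}3$.

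It remains to verify $\FR{2(n+k-1)}3\ge n-t_3-k_4$. Clearing denominators and using $n-2k=-k_1+k_3+2k_4$ reduces this to $3t_3\ge -k_1+k_3-k_4+2$. When $k_1+k_4\ge 2$, the right side is at most $k_3$, and (P1) finishes the job. The delicate case is $k_1=0$, $k_4=1$ (with $k_4\ge 1$ since $A$ is a 4-part), where the bound becomes $3t_3\ge k_3+1$ and (P1) does not immediately suffice; this is the main obstacle. I would resolve it via integrality: Corollary~\ref{arith}(a) asserts $\FR{k_1-k_3+k_4+1}3\in\ZZ$, which in this case forces $k_3\equiv 2\pmod 3$. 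Then $\lceil k_3/3\rceil=(k_3+1)/3$, and since $t_3$ is an integer, (P1) upgrades to $t_3\ge(k_3+1)/3$, giving $3t_3\ge k_3+1$ as needed.
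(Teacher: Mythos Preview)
Your proof is correct and follows the same approach as the paper's: both use Corollary~\ref{no3} to see that the merged list is disjoint from the two unmerged lists, invoke the good-merge property to absorb the term $|L(w)\cap L(z)|$, and then bound below by $\tfrac{2(n+k-1)}{3}$. Your final step is in fact more careful than the paper's terse appeal to ``$k_1\ge0$ and $2k_4/3<k_4$'' (which only directly gives $k_4\ge2$); your use of the congruence in Corollary~\ref{arith}(a) to upgrade (P1) to $t_3\ge(k_3+1)/3$ when $k_1=0,\ k_4=1$ makes explicit what the paper leaves to an implicit integrality argument.
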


\begin{proof}
Let $A$ be a $4$-part, with $x,y,z\in A^*$.  Since $A$ is good, we may assume
that $x$ is merged and that $y$ and $z$ are not.  Since $x$ arises from a good
merge, $|L(x)|\geq |L(y)\cap L(z)|$.  Also,
$L(x)\cap L(y)=L(x)\cap L(z)=\nul$ by Corollary~\ref{no3}. Therefore,
\begin{align*}
|L(x)\cup L(y)\cup L(z)|&=|L(x)|+|L(y)|+|L(z)|-|L(y)\cap L(z)|\\
&\geq |L(y)|+|L(z)|\geq \frac{2(n+k-1)}{3}\geq n+\FR{k_1-k_3-2k_4-2}3,
%-\frac{k_3}{3}-k_4-\frac{2}{3}.
\end{align*}
by Corollary~\ref{arith}(c).  By (P1), we have $t_3\geq k_3/3$, and clearly
$k_1\ge 0$ and $2k_4/3<k_4$, so
$|L(y)|+|L(z)|\ge n+{\FR{k_1-k_3-2k_4-2}3}\ge n-t_3-k_4$, and the
desired conclusion holds.
\end{proof}

Finally, we are ready to specify merges.  We will specify merges in special
sets $Z_3$ of $3$-parts and $Z_4$ of $4$-parts.  These will make each such part
good, though when we make two merges in a $4$-part in $Z_4$ they need not both
be good.

%this in a way that enables us to prove that if any one merge of a good pair is
We will do this in 
subsequently specified in each $3$-part or $4$-part outside $Z_3\cup Z_4$, then
the set of merges will satisfy properties (P1)--(P7) and Q1.  In
Section~\ref{additional}, we will show that those remaining good merges
can then be chosen so that (P8) is also satisfied.

\subsection{Parts of Size 3}

Specify a fixed set $Z_3$ of exactly $\FL{\FR{2k_3}3}$ $3$-parts; exactly
$\CL{\FR{k_3}3}$ $3$-parts lie outside $Z_3$.  For every $3$-part outside $Z_3$,
we will choose a good merge in the next section.  Here we choose merges in some
members of $Z_3$ based on intersection sizes; only good pairs will be merged.
Note that we have not yet specified the value $t_3$ giving the number of
$3$-parts that will have merges.

\begin{const}\label{z3}
Set $t_3$ to be the largest integer for which there exists a set $Z_3'\esub Z_3$
of size $t_3-\CL{\frac{k_3}{3}}$ such that $\ell(A)\geq\frac{k+t_3-1}{3}$ for
all $A\in Z_3'$.  For $A\in Z_3$, merge a pair in $A$ achieving $\ell(A)$ if
and only if $A\in Z_3'$.
\end{const}

By Corollary~\ref{ellgood}, a pair whose lists have largest intersection in
a part is always a good pair.  Possibly no member of $Z_3$ has such a large
intersection size, in which case $t_3=\CL{\FR{k_3}3}$ and $Z_3'$ is empty.
In any case, $t_3\ge k/3$.
%satisfying (P1).
%
%Recall that (P2) requires a merge in each $4$-part and (Q1) requires each
%part with a merge to have a good pair merged.  Under those properties, we can
%now conclude that our merging procedure satisfies most of the desired lower
%bounds on unions of list sizes.
%
%We prove next that (P3),
%(P4), and (P5) hold for all $3$-parts.  This also yields (P6), by
%Lemma~\ref{reduce3}. 

\begin{lem}
\label{3verts3}
If one merge is made in each $3$-part outside $Z_3$, then (P1) holds, (P3)
holds for $3$-parts, (P4) and (P5) hold, and (P6) holds for $3$-parts
in $Z_3$.
\end{lem}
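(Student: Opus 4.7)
The plan is to verify the five listed properties in turn, separating the unmerged $3$-parts (those in $Z_3\setminus Z_3'$) from the merged ones (those in $Z_3'$); by hypothesis every $3$-part outside $Z_3$ is merged, so these exhaust the cases. Property (P1) is immediate, since $t_3=|Z_3'|+\CL{k_3/3}\ge\CL{k_3/3}\ge k_3/3$.

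For $A\in Z_3\setminus Z_3'$, the key consequence of the maximality of $t_3$ in Construction~\ref{z3} is the bound $\ell(A)<(k+t_3)/3$: choose $Z_3'$ to contain every $A\in Z_3$ with $\ell(A)\ge (k+t_3)/3$; maximality forces the number of such parts to be at most $|Z_3'|$, so this is feasible, and the leftover parts have strictly smaller $\ell$. Integrality of $\ell(A)$ then gives $3\ell(A)\le k+t_3-1$. Combined with Corollary~\ref{no3} (which makes the pairwise intersections in a $3$-part sum to at most $3\ell(A)$), inclusion-exclusion yields $|L(A^*)|\ge (n+k-1)-3\ell(A)\ge n-t_3$, establishing (P3), and $3|L(x)\cup L(y)|\ge 2(n+k-1)-3\ell(A)\ge 2n+k-t_3-1$ for any pair $x,y\in A$. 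After substituting $2n-2k=2k_2+4k_3+6k_4$, each of (P4), (P5), and the unmerged instance of (P6) reduces to an easy linear inequality in the $k_i$'s and $t_3$, using $t_3\le k_3$ or, for (P4), the sharper $t_3\le k_3-1$ that holds whenever an unmerged $3$-part exists.

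For a merged $A\in Z_3'$, write $A^*=\{x,y\}$ with $y$ formed by merging a pair that attains $\ell(A)$. Any color of $L(y)$ already lies in two lists of $A$ and hence not in $L(x)$ by Corollary~\ref{no3}, so $|L(x)\cup L(y)|=|L(x)|+|L(y)|\ge (n+k-1)/3+\ell(A)\ge (n+k-1)/3+(k+t_3-1)/3$. Together with the integrality of $|L(x)\cup L(y)|$ and the fact that $(n+k-1)/3$ is an integer (Lemma~\ref{nk13}), this yields (P5). For (P6) on such an $A$, Corollary~\ref{ellgood} shows the merged pair is good, so Lemma~\ref{reduce3} finishes the case.

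The main obstacle is the tight integer arithmetic needed for (P5) on merged parts: the plain real-valued bound only works when $k_2\ge 2$, but the modular identity $n+k\equiv 1\pmod 3$ from Lemma~\ref{nk13} rules out exactly those residue classes of $k+t_3$ in which rounding $|L(x)\cup L(y)|$ up to the next integer would be insufficient.
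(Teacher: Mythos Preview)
Your argument is correct and follows the same route as the paper: split into unmerged parts ($Z_3\setminus Z_3'$) and merged parts ($Z_3'$), bound $\ell(A)$ on the former via the maximality of $t_3$, and use Construction~\ref{z3} directly on the latter.  Two remarks.

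First, your bound $3\ell(A)\le k+t_3-1$ for $A\in Z_3\setminus Z_3'$, obtained by re\-selecting $Z_3'$ to contain every part with $\ell(A)\ge(k+t_3)/3$, is exactly what maximality delivers and it suffices for everything that follows.  The paper states the slightly stronger $3\ell(A)\le k+t_3-2$; your version is the safe one.  Note also that once (P4) is established for an unmerged part, (P5) and (P6) for that part follow immediately since $t_3\le k_3$ and $k_3/3\le k_3$; so you do not need separate linear inequalities for those.

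Second, your closing paragraph on (P5) for merged parts is overcomplicated.  From
\[
|L(x)\cup L(y)|\ \ge\ \frac{n+k-1}{3}+\frac{k+t_3-1}{3}
\]
a direct substitution using $n-k=k_2+2k_3+3k_4$ gives the right-hand side as
\[
k+t_3+k_4+\frac{k_2+2(k_3-t_3)-2}{3}\ \ge\ k+t_3+k_4-\frac{2}{3},
\]
the last step using only $t_3\le k_3$.  Since $|L(x)\cup L(y)|$ is an integer, (P5) follows at once.  There is no residue class of $k+t_3$ in which plain rounding fails, and the integrality of $(n+k-1)/3$ from Lemma~\ref{nk13} is not needed here; your modular argument, while correct, can be dropped.
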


\begin{proof}
If we later merge one pair in each $3$-part outside $Z_3$ (there are
$\CL{\FR{k_3}3}$ such parts), then the total number of merges in $3$-parts 
will be $t_3$, and (P1) holds.

For $3$-parts, (P3) and (P4) apply only to those without merges, all lying in
$Z_3-Z_3'$.  Membership in $Z_3-Z_3'$ requires $\ell(A)\le {\frac{k+t_3-2}3}$.
By Corollary~\ref{no3},
\begin{align*}
|L(A)|&= \sum_{v\in A}|L(v)| - \sum_{\{u,v\}\in \CH A2} |L(u)\cap L(v)|\\
&\ge \FR{3(n+k-1)}3 - \FR{3(k+t_3-2)}3 > n-t_3\geq n-t_3-k_4,
\end{align*}
which proves (P3).  For (P4), we take just two vertices $x,y\in A$.  We compute
\begin{align*}
|L(x)\cup L(y)|&=|L(x)|+|L(y)|-|L(x)\cap L(y)|\\
&\ge \FR{2(n+k-1)}3-\FR{k+t_3-2}3\ge k+k_3+2k_4+\FR{2k_2}3\ge k+k_3+k_4
\end{align*}
using Corollary~\ref{arith}(c) and $k_3\ge t_3$.

Since $k_3\ge t_3$ and $k_3\ge k_3/3$, (P4) implies (P5) and (P6) for $3$-parts
containing no merge.  Since (P5) is imposed only for parts in $Z_3$, it
therefore suffices to consider $A\in Z_3'$.  By Corollary~\ref{ellgood}, the
merge in $A$ is good, so by Lemma~\ref{reduce3} $A$ satisfies (P6).  For (P5),
we may assume that $y$ is merged and $x$ is not.  Since $L(x)\cap L(y)=\nul$ by
Corollary~\ref{no3}, Construction~\ref{z3} yields
\begin{align*}
|L(x)\cup L(y)|&=|L(x)|+|L(y)|\geq \frac{n+k-1}3+{\frac{k+t_3-1}3} \\
&\geq k+\frac{k_2+2k_3+t_3}{3} + k_4-\frac{2}{3}\geq k+t_3+k_4-\frac{2}{3},
\end{align*}
using Corollary~\ref{arith}(b) and $k_3\ge t_3$.  Hence
$|L(x)\cap L(y)|\geq k+t_3+k_4$, as desired.
\end{proof}

To complete the proof of (P6) for all parts, it suffices by Lemma~\ref{reduce3}
to specify a good merge in each $3$-part outside $Z_3$.

\subsection{Parts of Size 4}
Corollary~\ref{arith}(a) and $n\ge2k+2$ imply that $\FR{k_1-k_3+k_4+1}3$ is an
integer less than $k_4$.  Hence we can specify a fixed set $Z_4$ consisting of
exactly $\max\left\{0,\frac{k_1-k_3+k_4+1}{3}\right\}$ of the $4$-parts, chosen
arbitrarily.  In the next section we will choose one good pair to merge in each
$4$-part not in $Z_4$.  Here we specify one or two merges in each member of
$Z_4$, based on intersection sizes.

\begin{const}\label{merge4}
For $A\in Z_4$, merge a pair $\{u,v\}$ such that $|L(u)\cap L(v)|=\ell(A)$.
Also merge the remaining pair $\{w,z\}$ if
$|L(w)\cap L(z)|\geq s$, where $s=\frac{2n-k+1}{3}-k_4$.
\end{const}

\begin{rem} \label{z4rem}
Lemma~\ref{nk13} implies that $s$ is an integer.  Also $s\ge \FR k3$, since
$s-\FR k3>\FR{2(n-k)}3-k_4\ge\FR{6k_4}3-k_4$.  Since also
$\ell(A)\ge{\FR k3}$ (Corollary~\ref{k/3}), the list of any merged vertex in
a member of $Z_4$ has size at least ${\FR k3}$.
\end{rem}

\begin{lem}
The merging procedure satisfies (P7).
\end{lem}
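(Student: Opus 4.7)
My plan is to split on whether the part $A$ received one or two merges during Construction~\ref{merge4}. Write $A=\{u,v,w,z\}$, with $\{u,v\}$ the pair achieving $\ell(A)$, and let $x\in A^*$ be the merged vertex with $L(x)=L(u)\cap L(v)$. In the two-merge case, $A^*=\{x,y\}$ with $L(y)=L(w)\cap L(z)$; Corollary~\ref{no3} forces $L(x)\cap L(y)=\nul$, since a common color would appear in all four lists of $A$. The merging rule combined with the maximality of $\ell(A)$ gives $|L(x)|,|L(y)|\geq s$, so $|L(x)\cup L(y)|\geq 2s$. A direct computation yields
\[2s-(k+k_4)=\frac{-k_1+3k_2+7k_3+2k_4+2}{3};\]
substituting $-k_1+k_3+2k_4\geq 2$ (which is $n\geq 2k+2$ from Remark~\ref{mincex}) gives $2s-(k+k_4)\geq\frac{4+3k_2+6k_3}{3}>0$, closing this case.

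In the one-merge case, $A^*=\{x,w,z\}$, and (P7) must be verified for each pair. For $\{w,z\}$, the fact that Construction~\ref{merge4} did not merge them means $|L(w)\cap L(z)|\leq s-1$, so
\[|L(w)\cup L(z)|\geq\frac{2(n+k-1)}{3}-(s-1),\]
and a short calculation using the definition of $s$ shows the right side simplifies to exactly $k+k_4$ (which is presumably the reason $s$ was chosen as it was). For the pair $\{x,w\}$ (and symmetrically $\{x,z\}$), Corollary~\ref{no3} again yields $L(x)\cap L(w)=\nul$ (a common color would lie in three lists of $A$), so $|L(x)\cup L(w)|\geq \ell(A)+\frac{n+k-1}{3}$; reducing against $k+k_4$ leaves us needing $\ell(A)\geq\frac{k_1-k_3+k_4+1}{3}=|Z_4|$.

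The main obstacle will be this last inequality, since Corollary~\ref{k/3} supplies only $\ell(A)\geq k/3$. I plan to handle it by a case split on whether $k_2+2k_3\geq 1$. If so, then $k=k_1+k_2+k_3+k_4\geq k_1-k_3+k_4+1=3|Z_4|$, and since $\ell(A)$ is an integer we conclude $\ell(A)\geq\lceil k/3\rceil\geq|Z_4|$. Otherwise $k_2=k_3=0$, so $|Z_4|=(k+1)/3$; since $A\in Z_4$ guarantees this is a positive integer (Corollary~\ref{arith}(a)), we must have $k\equiv 2\pmod 3$, and then $\lceil k/3\rceil=(k+1)/3=|Z_4|$, yielding $\ell(A)\geq|Z_4|$ again. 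Together the two subcases complete (P7).
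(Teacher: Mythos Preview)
Your proof is correct and follows essentially the same three-case analysis as the paper, which organizes the cases by how many of $x,y$ are merged rather than by how many merges occur in $A$, but these partitions coincide. For the mixed pair $\{x,w\}$ the paper proceeds more directly: rather than your case split on $k_2+2k_3$, it invokes Remark~\ref{z4rem} and Corollary~\ref{arith}(b) to obtain $|L(x)\cup L(w)|\geq\frac{n+k-1}{3}+\frac{k}{3}\geq k+k_4-\frac{1}{3}$ and finishes by integrality.
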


\begin{proof}
Property (P7) applies only for $A\in Z_4$.  Given $A\in Z_4$ and $x,y\in A^*$,
we need $|L(x)\cup L(y)|\ge k+k_4$.  We consider three cases, depending on the
merges in $A$.

\begin{case}
{\it Neither $x$ nor $y$ is merged.}
By Construction~\ref{merge4},
$|L(x)\cap L(y)|\leq s-1$.  Thus
\begin{align*}
|L(x)\cup L(y)|&\geq |L(x)|+|L(y)|-|L(x)\cap L(y)| 
\ge\FR{2(n+k-1)}3-\FR{2n-k-2}3+k_4=k+k_4.
\end{align*}
\end{case}

\begin{case}
{\it Exactly one of $\{x,y\}$, say $y$, is merged.}
By Corollary~\ref{no3}, $L(x)\cap L(y)=\nul$.  Thus, by Remark~\ref{z4rem} and
Corollary~\ref{arith}(b),
\[|L(x)\cup L(y)|\geq |L(x)|+|L(y)| \geq
\frac{n+k-1}{3}+\FR k3 \geq k+k_4-\frac{1}{3},\]
and $|L(x)\cup L(y)|$ is an integer.
\end{case}

\begin{case}
{\it Both $x$ and $y$ are merged.}
By Construction~\ref{merge4} and symmetry, we may assume
$\ell(A)=|L(x)|\ge|L(y)|\ge s$.  Since $L(x)\cap L(y)=\nul$ by
Corollary~\ref{no3}, we compute
\begin{align*}
|L(x)\cup L(y)|&=|L(x)|+|L(y)|\ge2s =n+\FR{n-2k+2}3-2k_4>
\left(\sum ik_i\right)-2k_4\ge k+k_4.
\end{align*}

In each case, the desired inequality holds.
\end{case}

\vspace{-2pc}
\end{proof}

Note that the verification of (P7) did not use any property of merges in
parts outside $Z_4$.

\section{The Remaining Merges} \label{additional}

At this point, we can reduce the proof of Theorem~\ref{main} to one main task.

\begin{lem}\label{reduction}
If in addition to the merges previously specified, it is possible to specify
one good merge in each $3$-part outside $Z_3$ and each $4$-part outside $Z_4$
in such a way that (P8) holds, then Theorem~\ref{main} is true.
\end{lem}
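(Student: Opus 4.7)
The plan is to show that, under the stated hypothesis, the complete system of merges satisfies all eight properties (P1)--(P8), so that Lemma~\ref{P1P8} produces an SDR whose chosen representatives directly define an $L$-coloring of $G$, contradicting Remark~\ref{mincex}. The proof is essentially a bookkeeping check: most properties have already been verified in Sections~\ref{greed}, and the hypothesis supplies exactly what is needed to cover the remaining cases.

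First I would walk through the properties in turn. Property (P1) holds by Lemma~\ref{3verts3}, since the hypothesis provides one merge in each of the $\CL{k_3/3}$ $3$-parts outside $Z_3$, bringing the total number of merged $3$-parts to $t_3$. Property (P2) is immediate because each part of $Z_4$ already contains a merge by Construction~\ref{merge4}, and each $4$-part outside $Z_4$ receives a (good) merge by hypothesis. For (P3), the $3$-part case is covered by Lemma~\ref{3verts3} and the $4$-part case by Lemma~\ref{reduce4}, whose hypothesis (P1) has just been verified; Lemma~\ref{reduce4} requires the $4$-part to be good, which holds for $Z_4$-parts by Construction~\ref{merge4} together with Corollary~\ref{ellgood}, and for the remaining $4$-parts by the hypothesis. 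Properties (P4), (P5), and (P6) for $3$-parts in $Z_3$ are given by Lemma~\ref{3verts3}; (P6) for $3$-parts outside $Z_3$ follows from Lemma~\ref{reduce3} applied to the hypothesized good merges; (P7) was established at the end of Section~\ref{greed}; and (P8) is precisely the second part of the hypothesis.

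With (P1)--(P8) in hand, Lemma~\ref{P1P8} yields an SDR for the family of lists on the (possibly merged) vertex set. Define a coloring $f$ of $V(G)$ by assigning to each unmerged vertex $v$ its representative from $L(v)$, and assigning to the two original vertices $u,v$ comprising any merged vertex $w$ the common representative selected from $L(w)=L(u)\cap L(v)$. Since merges are only performed within a single part of $G$, any two vertices receiving the same color are non-adjacent, so $f$ is proper; and $f(v)\in L(v)$ for every $v\in V(G)$ by construction. Thus $f$ is an $L$-coloring of $G$, contradicting Remark~\ref{mincex} and completing the proof of Theorem~\ref{main}.

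The main obstacle in this step is not verification but rather the hypothesis itself: one must still exhibit an explicit procedure that simultaneously chooses a good merge in every $3$-part outside $Z_3$ and every $4$-part outside $Z_4$ while guaranteeing that the resulting family of lists on merged vertices admits an SDR. That combinatorial task is deferred to the remainder of Section~\ref{additional}; the present lemma merely records that solving it is sufficient.
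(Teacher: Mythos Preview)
Your proof is correct and follows essentially the same approach as the paper: verify (P1)--(P8) by combining the already-established lemmas (Lemma~\ref{3verts3}, Lemma~\ref{reduce3}, Lemma~\ref{reduce4}, and the verification of (P7)) with the hypothesized good merges outside $Z_3\cup Z_4$, then invoke Lemma~\ref{P1P8}. You add a bit more detail than the paper in two places---explicitly noting that parts in $Z_4$ are good via Corollary~\ref{ellgood}, and spelling out how the SDR yields an $L$-coloring contradicting Remark~\ref{mincex}---but the argument is otherwise identical.
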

\begin{proof}
Specifying any merge in each $3$-part outside $Z_3$ completes the proofs of
(P1), (P4), and (P5), and it completes the proof of (P3) for $3$-parts
(by Lemma~\ref{3verts3}).  If those merges are good, then by
Lemma~\ref{reduce3} we also have proved (P6) completely.

Furthermore, we have previously proved (P7) completely, since it applies only
to parts in $Z_4$.  Specifying any merge in each $4$-part outside $Z_4$
completes the proof of (P2).  If those merges are good and we have specified
merges outside $Z_3$, then by Lemma~\ref{reduce4} we have (P3) also for
$4$-parts.

If the merges also satisfy (P8), then Lemma~\ref{P1P8} completes the proof.
\end{proof}

Hence our task is to merge a good pair in every $3$-part outside $Z_3$ and
every $4$-part outside $Z_4$ in such a way that (P8) holds.  In fact, we
specify the merges among the possible good merges {\it by requiring that (P8)
holds}.  Note that since we have already proved (P7) completely, we can use it
in this section.

Let $T$ denote the set of all merged vertices in parts in $Z_3\cup Z_4$.
Let $Y$ denote the set of parts of size $3$ or $4$ outside $Z_3\cup Z_4$.
To complete the proof, we need to find distinct colors, one for each vertex of
$T$ and one for each part in $Y$, such that the color chosen for each $A\in Y$
belongs to both lists for a good pair in $A$, and the color chosen for a merged
vertex $w$ in $T$ belongs to $L(w)$.  To obtain such a set of colors (and
thereby define the remaining merges), we again apply Hall's Theorem.

\begin{defn}
For $A\in Y$, let $L_A$ be the set of all colors $c$ such that
$c\in L(u)\cap L(v)$ for some good pair $\{u,v\}\esub A$.  Let $X$ be the
family of sets consisting of $L_A$ for all $A\in Y$ and $L(w)$ for all $w\in T$.
\end{defn}

We seek an SDR for $X$.  We start with lower bounds on $|L_A|$ for $A\in Y$. 
Note that this special list $L_A$ differs from $L(A)$, which we defined
to be $\bigcup_{v\in A}L(v)$.

\begin{lem}
\label{vp3}
If $A\in Y$ and $|A|=3$, then $|L_A|\geq k_3+{\FR{k_1+k_4}3}$.
\end{lem}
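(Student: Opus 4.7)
The plan is to exploit the pairwise disjointness of the three list-intersections in a 3-part, together with the lower bound on their total size supplied by Lemma~\ref{geqk}. Write $A=\{u_1,u_2,u_3\}$ and set $I_{ij}=L(u_i)\cap L(u_j)$. Corollary~\ref{no3} forbids any color from appearing in three lists of the same part, so $I_{12}$, $I_{13}$, $I_{23}$ are pairwise disjoint. Consequently $L_A$, which is by definition the union of $I_{ij}$ over good pairs in $A$, is a disjoint union of those particular $I_{ij}$'s, and Lemma~\ref{geqk} gives
$$|I_{12}|+|I_{13}|+|I_{23}|\ge k.$$

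Next I would bound how much the non-good pairs can contribute to this sum. Since $|I_{ij}|$ is an integer, failure of the defining inequality in Definition~\ref{defgood} (for a 3-part) forces $|I_{ij}|\le\FR{k_1+k_4}{3}$ for any non-good pair in $A$. Moreover, by Corollary~\ref{ellgood} the pair in $A$ maximizing $|L(u)\cap L(v)|$ is always good, so at most two of the three pairs in $A$ can be non-good.

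Combining these two observations, at most $2\cdot\FR{k_1+k_4}{3}$ of the total intersection mass can lie outside $L_A$, hence
$$|L_A|\ge k-\FR{2(k_1+k_4)}{3}=k_2+k_3+\FR{k_1+k_4}{3}\ge k_3+\FR{k_1+k_4}{3},$$
which is the desired bound.

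The argument is short and I do not foresee any real obstacle; the lemma is essentially a direct consequence of Corollary~\ref{no3} (pairwise disjointness of the $I_{ij}$), Corollary~\ref{ellgood} (at least one pair of $A$ is good), and Lemma~\ref{geqk} (the three intersections sum to at least $k$). The only mild subtlety is deducing the clean bound $|I_{ij}|\le\FR{k_1+k_4}{3}$ from the strict inequality hidden in the definition of a good pair, which is immediate from the integrality of $|I_{ij}|$.
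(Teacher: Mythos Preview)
Your proof is correct and follows essentially the same route as the paper's: use Corollary~\ref{ellgood} to guarantee at least one good pair, bound each non-good intersection by $\frac{k_1+k_4}{3}$ via Definition~\ref{defgood}, and subtract at most two such contributions from the total $\ge k$ supplied by Lemma~\ref{geqk}. Your explicit invocation of Corollary~\ref{no3} for the pairwise disjointness of the $I_{ij}$ and your remark on the integrality step make transparent two points the paper leaves implicit, but the argument is the same.
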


\begin{proof}
By Corollary~\ref{ellgood}, some pair in $A$ is good.  If $\{u,v\}$ is not
good, then $|L(u)\cap L(v)|\leq {\FR{k_1+k_4}3}$, by Definition~\ref{defgood}.
At most two pairs are not good, so Lemma~\ref{geqk} and $k=\sum k_i$ yield
\[|L_A|\ge k-\FR{2(k_1+k_4)}3 \geq k_3+\frac{k_1+k_4}{3}.\]

\vspace{-2pc}
\end{proof}
\medskip

\begin{lem}
\label{vp4}
If $A\in Y$ and $|A|=4$, then $|L_A|\geq k_3+k_4$.
\end{lem}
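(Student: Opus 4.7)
The plan is to bound $|L_A|$ from below by averaging pairwise list-intersection sizes across the three partitions of $A$ into two complementary pairs. In each such partition $\{\{u,v\},\{w,z\}\}$, Definition~\ref{defgood} makes the pair with the larger intersection good (and both good if they tie), so that partition contributes at least $\max(|L(u)\cap L(v)|,|L(w)\cap L(z)|)$, and in particular at least the average of those two intersection sizes, to $|L_A|$.

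Two simplifying observations power this averaging. First, by Corollary~\ref{no3} no color lies in three lists of $A$, so the sets $L(u)\cap L(v)$ over distinct pairs $\{u,v\}\subseteq A$ are pairwise disjoint; consequently $|L_A|=\sum_{\{u,v\}\text{ good}}|L(u)\cap L(v)|$. Second, the same vanishing of triple intersections collapses inclusion--exclusion to
\[\sum_{\{u,v\}\subseteq A}|L(u)\cap L(v)| \;=\; \sum_{v\in A}|L(v)| \;-\; |L(A)|.\]
Since each unordered pair of $A$ appears in exactly one of the three partitions, summing the per-partition averaging bound gives $|L_A|\geq\tfrac{1}{2}\sum_{\{u,v\}\subseteq A}|L(u)\cap L(v)|$.

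Plugging in $|L(v)|\geq (n+k-1)/3$ from Remark~\ref{mincex} and $|L(A)|\leq n-1$ from Lemma~\ref{<n} will then yield
\[|L_A| \;\geq\; \tfrac{1}{2}\!\left(4\cdot\tfrac{n+k-1}{3}-(n-1)\right) \;=\; \tfrac{n+4k-1}{6}.\]

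To finish I will verify $\tfrac{n+4k-1}{6}\geq k_3+k_4$. Substituting $n=\sum_i ik_i$ and $k=\sum_i k_i$ reduces the inequality to $5k_1+6k_2+k_3+2k_4\geq 1$, which holds since $A$ is a $4$-part and hence $k_4\geq 1$. I do not anticipate any real obstacle here: the averaging has comfortable slack, since the target $k_3+k_4\leq k$ sits well below $\tfrac{n+4k-1}{6}\geq\tfrac{6k+1}{6}>k$ by $n\geq 2k+2$.
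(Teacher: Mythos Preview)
Your proof is correct and is essentially the same as the paper's: both establish $|L_A|\ge\frac12\sum_{\{u,v\}\subseteq A}|L(u)\cap L(v)|$ via the good-pair definition and Corollary~\ref{no3}, then use inclusion--exclusion and the list-size and small-pot bounds to get $|L_A|\ge\frac{n+4k-1}{6}>k\ge k_3+k_4$. You simply spell out the averaging over the three complementary-pair partitions more explicitly than the paper does.
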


\begin{proof}
By Corollary~\ref{no3} and the definition of a good pair for $A$,
\[|L_A|\ge\FR12\sum_{\{u,v\}\in \CH A2}|L(u)\cap L(v)| =
\FR{\sum_{u\in A}|L(u)|}2 - \FR{\bigl|\bigcup_{u\in A}L(u)\bigr|}2.\]
Since the union of all lists has fewer than $n$ colors,
\[|L_A|\ge\FR12\left(\FR{4(n+k-1)}3 -(n-1)\right) =\FR{n+4k-1}6>k\ge k_3+k_4,\]
since $n>2k+1$.
\end{proof}

As argued in Lemma~\ref{reduction}, the following lemma completes the
proof of Theorem~\ref{main}.

\begin{lem}
There is an SDR for $X$. 
\end{lem}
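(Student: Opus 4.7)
The plan is to apply Hall's theorem: for every subfamily $X' \subseteq X$, we must show $|\bigcup_{F \in X'} F| \geq |X'|$. Suppose for contradiction that some $X' \subseteq X$ violates this, so $|\bigcup X'| < |X'|$. Partition $X'$ into three types according to the kind of set it contains: let $a$ be the number of $L_A$ with $A \in Y$ and $|A|=4$, let $b$ be the number of $L_A$ with $A \in Y$ and $|A|=3$, and let $c$ be the number of merged-vertex lists $L(w)$ with $w \in T$, so $|X'| = a+b+c$. Since every $F \in X'$ satisfies $|F| \leq |\bigcup X'|$, lower bounds on $|F|$ directly restrict the possible size of $X'$. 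The available lower bounds are: $|L_A| \geq k_3 + k_4$ when $|A|=4$ (Lemma \ref{vp4}); $|L_A| \geq k_3 + (k_1+k_4)/3$ when $|A|=3$ (Lemma \ref{vp3}); $|L(w)| \geq k/3$ for any merged $w$ (Corollary \ref{k/3} and Remark \ref{z4rem}); and $|L(w)| \geq s = (2n-k+1)/3 - k_4$ when $w$ arises from a \emph{second} merge inside a $4$-part of $Z_4$.

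The argument then mirrors the greedy peeling used in Lemma \ref{P1P8}. First I would bound $|X|$ overall: using $|Y| = \lceil k_3/3 \rceil + (k_4 - |Z_4|)$ and $|T| = (t_3 - \lceil k_3/3 \rceil) + |Z_4| + m_4$ (with $m_4$ the number of $4$-parts of $Z_4$ receiving a second merge), one gets $|X| = t_3 + k_4 + m_4$, which by Corollary \ref{arith}(a) is comfortably at most $\lceil(n+k-1)/3\rceil$, so in particular $|X| \leq n-1$. Next, case-analyze according to which types are present in $X'$. If $a \geq 1$, the bound $|\bigcup X'| \geq k_3 + k_4$ from Lemma \ref{vp4} combined with $|X'| \leq |X| = t_3 + k_4 + m_4$ and $t_3 \leq k_3$, $m_4 \leq |Z_4| \leq k_4$ should force Hall's condition using Corollary \ref{arith}. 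If $a = 0$ but $b \geq 1$, apply Lemma \ref{vp3} and argue similarly. Throughout, I would repeatedly invoke the Small Pot bound $|\bigcup_{v} L(v)| \leq n-1$ and Corollary \ref{arith} to convert colour-counts into part-counts.

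The main obstacle will be the case $a = b = 0$, where $X'$ consists entirely of merged-vertex lists. Here the per-set lower bound is only $k/3$, and $|X'|$ can be as large as $|T| = (t_3 - \lceil k_3/3\rceil) + |Z_4| + m_4$. The trick is to exploit that these lists are pairwise disjoint within each part (by Corollary \ref{no3}), and for the $Z_4$ parts with two merges both lists have size at least $s$, giving a stronger lower bound on the union. Specifically, for a set of $c$ merged-vertex lists drawn from parts of $Z_3' \cup Z_4$, summing sizes and applying the Small Pot bound should yield $|\bigcup X'| \geq \min(c \cdot (k/3), n-1)$, and refining this via the identities in Corollary \ref{arith}(b,c) should close the gap. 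Mixed subfamilies (containing both merged-vertex lists and $L_A$'s) are handled by peeling off the large $L_A$ first and reducing to the pure merged-vertex case on a smaller color universe. Once every case is checked, Hall's theorem supplies the desired SDR and, as explained in Lemma \ref{reduction}, completes the proof of Theorem \ref{main}.
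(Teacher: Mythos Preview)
Your peeling outline mirrors the paper's approach, but two concrete gaps prevent it from closing.

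First, in the case $a=b=0$ you claim that for $c$ merged-vertex lists one gets $|\bigcup X'|\ge\min\{c\cdot k/3,\,n-1\}$ by ``summing sizes''.  This is false: lists of merged vertices lying in \emph{different} parts may overlap arbitrarily, so summing gives only an upper bound on the union, not a lower bound.  Disjointness from Corollary~\ref{no3} helps only for the (at most two) lists inside a single part of $Z_4$.  All you can legitimately say is $|\bigcup X'|\ge\max_{F\in X'}|F|$, and with only the bound $|L(w)|\ge k/3$ this is far too weak: when $k=k_3$ one can have $|Z_3'|$ close to $2k_3/3$, while $k/3=k_3/3$.

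Second, you never invoke the key feature of Construction~\ref{z3}: every merged vertex arising from a part in $Z_3'$ has list size at least $\lceil(k+t_3-1)/3\rceil$, not merely $k/3$.  In the paper this stronger bound is exactly what handles the case where $X'$ consists of at most one merged-vertex list per part, since one verifies $\lceil(k+t_3-1)/3\rceil\ge|Z_3'|+|Z_4|$.  Relatedly, the paper begins by using (P7) (equivalently, $2s\ge k+k_4$) to dispose of any $X'$ containing both merged-vertex lists from a single part of $Z_4$; without this step your bound $|X'|\le t_3+k_4+m_4$ in the $a\ge1$ case does not combine with $k_3+k_4$ to give a contradiction, because $k_3-t_3\ge m_4$ need not hold.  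You should insert the (P7) step first, then use the $(k+t_3-1)/3$ bound for $Z_3'$ lists in place of the invalid summing argument.
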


\begin{proof}
As in Lemma~\ref{P1P8}, we check Hall's Condition by verifying
for successively restricted $S\esub X$ that the union of the lists indexed
by $S$ has size at least $|S|$.  By construction, each $3$-part contributes at
most one list to $X$, each $4$-part outside $Z_4$ also contributes at most one,
and each part in $Z_4$ contributes at most two.
Hence $|S|\le |X|\le k_3+k_4+|Z_4|$.

If $S$ contains two lists for a part $A$, then $A\in Z_4$.  By (P7), the union
of these two lists has size at least $k+k_4$, which exceeds $|X|$.  Hence $S$
has at most one list from each part, so $|S|\le k_3+k_4$.  By Lemma~\ref{vp4},
we are now finished if $S$ contains a list for a $4$-part outside $Z_4$, so we
may assume $|S|\le k_3+|Z_4|$.

If $S$ contains the list for a $3$-part $A$ outside $Z_3$, then
$|L_A|\ge k_3+{\FR{k_1+k_4}3}$, by Lemma~\ref{vp3}.  Since $A$ is a $3$-part,
$k_3\ge1$, so ${\FR{k_1+k_4}3}\ge\max\{0,\FR{k_1-k_3+k_4+1}3\} = |Z_4|$, and
$|L_A|\ge |S|$.

Thus we may assume that $S$ contains lists only for parts in $Z_3\cup Z_4$,
and at most one list for each such part.  These are lists for vertices merged
in Section~\ref{greed}.  Within $Z_3$, we performed such merges only for parts
in $Z_3'$, so
$$
|S|\leq |Z_3'|+|Z_4| =
t_3-\left\lceil\frac{k_3}{3}\right\rceil+\max\left\{0,\frac{k_1-k_3+k_4+1}{3}\right\}.
$$

By Construction~\ref{z3}, the list for any merged vertex from a part in $Z_3'$
has size at least $\CL{\FR{k+t_3-1}3}$.  Whether $Z_4$ is empty or not,
$t_3\le k_3$ yields $\CL{\FR{k+t_3-1}3}\ge |Z_3'|+|Z_4|$.

Hence $S$ contains lists only for at most one merged vertex from each part in
$Z_4$.  That is, $|S|\leq |Z_4| =\max\left\{0,\frac{k_4+k_1-k_3+1}3\right\}$.
By Remark~\ref{z4rem}, each such list has size at least ${\FR k3}$.  Since
$k\ge k_1-k_3+k_4$ and $\FR{k_1-k_3+k_4+1}3$ is an integer (by
Corollary~\ref{arith}(a)), always the size of the union of the lists in
$S$ is at least $|S|$.
\end{proof}

\begin{ack}
We thank Bruce Reed for pointing out a simplification of the proof, and we
thank an anonymous referee for many useful suggestions concerning the 
exposition.
\end{ack}

%\bibliography{ohbagen}
%  \bibliographystyle{amsplain}
%  \nocite{*}

\end{document}